\newcommand{\tabincell}[2]{\begin{tabular}{@{}#1@{}}#2\end{tabular}}
\newcommand{\cg}{\cellcolor{gray}} 
\newtheorem{theorem}{Theorem}
\newtheorem{theorem2}{Theorem}
\newtheorem{cor}{Corollary}
\newtheorem{lemma}{Lemma}
\newtheorem{defn}{Definition}
\newtheorem{prob}{Problem}
\theoremstyle{definition}
\newtheorem{example}{Example}
\newtheorem*{remark}{Remark}
\theoremstyle{remark}
\DeclareMathAlphabet{\mathpzc}{OT1}{pzc}{m}{it}
\newcommand{\grp}[1]{\mathpzc{#1}}	
\newcommand{\set}[1]{\mathpzc{#1}}	
\newcommand{\itr}{\mathbb Z}			
\newcommand{\ff}[1]{{\mathbb F}_{#1}}		
\newcommand{\ffs}[1]{{\mathbb F}_{#1}^\star}	
\newcommand{\ffx}[1]{\ff{#1}[x]}		
\DeclareMathOperator{\image}{Im}
\newcommand{\im}[1]{\image(#1)}
\DeclareMathOperator{\preim}{PreIm}
\DeclareMathOperator{\res}{Res}			
\DeclareMathOperator{\pres}{P-Res}		
\DeclareMathOperator{\range}{range}
\subjclass[2020]{
11T06, 
11T71, 
12E10, 
12E20, 
90C10  
}
\date{}
\begin{document}

\title{Algorithms for computing the permutation resemblance of functions over finite groups}
\author{Li-An Chen and Robert S. Coulter}
\address{Department of Mathematical Sciences\\
University of Delaware\\
Newark DE 19716, USA}
\email[L.-A. Chen]{lianchen@udel.edu}
\email[R.S. Coulter]{coulter@udel.edu}

\begin{abstract}
Permutation resemblance measures the distance of a function from being a
permutation. Here we show how to determine the permutation resemblance through
linear integer programming techniques.
We also present an algorithm for constructing feasible solutions to this
integer program, and use it to prove an upper bound for permutation
resemblance for some special functions.
Additionally, we present a generalization of the linear integer program that
takes a function on a finite group and determines a permutation with
the lowest differential uniformity among those most resembling it.
\end{abstract}

\maketitle

\section{Introduction and the Main Results}

Throughout this paper $\grp{G}$ denotes a finite group, not necessarily
abelian but written additively, and $\ff{q}$ denotes the finite field of order
$q$.
For a finite set $\set{S}$, $\# \set{S}$ denotes the cardinality of $\set{S}$.

Let $f:\grp{G}\to \grp{G}$.
The set of distinct images of $f$ is denoted by
$\im{f}:=\{f(x)\,:\,x\in\grp{G}\}$, and we write $V(f):=\# \im{f}$.
We call $f$ a \textit{permutation} over $\grp{G}$ if $\im{f}=\grp{G}$, i.e.
when $f$ is a bijection over $\grp{G}$.
For $b\in \grp{G}$, the set of preimages of $b$ under $f$ is denoted by
$\preim(f,b):=\{x\in \grp{G}\,:\, f(x)=b\}$. The \textit{uniformity of $f$} is
defined by \[u(f):=\max_{b\in \grp{G}} \#\preim(f,b).\]
For a nonzero $a\in \grp{G}$, the \textit{differential operator of $f$ in the direction of $a$} is defined by 
\begin{equation*}
\Delta_{f,a}(x):=f(x+a)-f(x), 
\end{equation*}
and the \textit{differential uniformity (DU) of $f$} is defined by 
\begin{equation*}
\delta_{f}:=\max_{a\in \grp{G}\setminus\{0\}}u(\Delta_{f,a}).  
\end{equation*}
The concept of DU was first suggested by Nyberg \cite{nyberg93}.
The lower the DU, the more resistant $f$ is to differential attacks when used in a cryptosystem.
Functions with optimal DU are called \textit{planar} when $\# \grp{G}$ is odd (with $1$-DU). A classic example is $x^2$, which is planar over every field of odd characteristic. When $\# \grp{G}$ is even, the optimal functions are called \textit{almost perfect nonlinear (APN)} (with $2$-DU). One of the most important problems related to DU is the construction of permutations over finite fields with optimal or low DU. These functions are the most desired for the construction of S-boxes in cryptosystems.
Planar functions cannot be permutations, so over
$\ff{q}$ when $q$ is odd the problem becomes that of finding permutations with
low DU.
For extensions of $\ff{2}$, while there are several examples of APN permutations over odd dimensions, there is only one known APN permutation in even
dimensions, an example in dimension $6$ discovered by Browning, Dillon,
McQuistan, and Wolfe in 2010 \cite{bro10}.
For more than a decade, the existence of APN permutations over higher even extensions of $\ff{2}$ remains open and is considered one of the most important open problems in the theory of APN functions. 

Motivated by the construction of low DU permutations, the authors introduced
the notions of permutation resemblance in \cite{prestheory}.
The concept provides a new way to measure the ``distance'' of a function from
being a permutation. For two functions $f,h:\grp{G}\to \grp{G}$, we first define the \textit{resemblance of $f$ to $h$} by
\begin{equation*}
\res(f,h)=V(f-h). 
\end{equation*}
Observe that for any functions $f,h:\grp{G}\to \grp{G}$, we have $\res(f,f)=\# \{0\}=1$, $\res(f,h)=\res(h,f)$, and $\res(f,h+c)=\res(f,h)$ for any constant $c\in \grp{G}$. The central concept of this paper is the following. 
\begin{defn}
For $f:\grp{G}\to \grp{G}$, the \textit{permutation resemblance of $f$} is defined by 
\begin{equation*}
\pres(f)=\min\{\res(f,h)\,:\, h\text{ is a permutation over }\grp{G}\}, 
\end{equation*}
or equivalently, by writing $f-h=-g$, 
\begin{equation}\label{eq:def_ppres}
\pres(f)=\min\{V(g)\,:\, g+f\text{ is a permutation over }\grp{G}\}. 
\end{equation}
\end{defn}

It is important to note that there could be many functions ($h$ or $g$ in the
above expressions) that give the $\pres(f)$.
Given a non-permutation $f$, $\pres(f)$ can be understood as the ``minimum'' changes required in order to modify $f$ into a permutation.
Unlike other methods for measuring the distance a function is from being a
permutation ($V(f)$ is a common one), permutation resemblance can be used to
construct low DU permutations if we start with function $f$ with low or optimal
DU.
In \cite{prestheory}, it was proved that if $f,g$ are two functions over a finite abelian group $\grp{G}$, then 
\[\delta_{g+f}\le \delta_{f}\cdot \big(V(g)^{2}-V(g)+1\big). \]
In particular, if $V(g)=\pres(f)$, then 
\[\delta_{g+f}\le \delta_{f}\cdot \big(\pres(f)^{2}-\pres(f)+1\big). \]
Thus, by computing $\pres(f)$ for an optimal DU function $f$, and finding a set of such $g$, we obtain a set of permutations of the form $g+f$ whose DU is controlled by $\delta_{f}$ and $\pres(f)$. This observation motivates the present
article.

The aim of this article is to present algorithms for computing $\pres(f)$ for
an arbitrary $f$ and for constructing permutations $g+f$ that satisfy
$V(g)=\pres(f)$.
The key idea is based on rephrasing the problem of determining $\pres(f)$ into
the problem of searching a certain family of subtables in the {\em subtraction
table}
indexed by $\grp{G}$ and $\im{f}$, see Section \ref{sc:sub_tb} for details.
In Section \ref{sc:alg_IP}, we further formulate this new problem as a linear integer programming problem (IP).
Though it is not often used in the research of permutations over finite groups,
linear programming is known to be a useful tool in extremal combinatorics,
a recent example being the paper by Wagner \cite{wagner20} which disproved a
number of open conjectures in extremal combinatorics with linear programming
methods. 

By solving the IP, one obtains the exact value of $\pres(f)$ for any $f:\grp{G}\to \grp{G}$, and a set of permutations $g+f$ such that $V(g)=\pres(f)$.
In Section \ref{sc:alg_avg}, we present an algorithm for constructing a feasible solution of this IP. Using this algorithm, we then prove the following upper bounds for $\pres(f)$ when $f$ is a two-to-one function. 

\begin{theorem}\label{th:2to1bound_even}
If $\# \grp{G}=q$ is even and $f: \grp{G}\to \grp{G}$ is two-to-one, then
\[\pres(f)\le \left\lceil 2\sqrt{q} \right\rceil-1. \]
When $q$ is a perfect square, the bound can be improved to 
\[\pres(f)\le  2\sqrt{q}-2.\]
\end{theorem}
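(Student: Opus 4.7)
The plan is to analyze the algorithm of Section \ref{sc:alg_avg}, which constructs a function $g$ with $g+f$ a permutation while keeping $V(g)$ small. Since $f$ is two-to-one, $\#\im{f} = q/2$ and the fibers partition $\grp{G}$ into preimage pairs $\{a_y, b_y\} = \preim(f, y)$. For a fixed candidate value set $W \subseteq \grp{G}$, a function $g$ with $\im{g} \subseteq W$ makes $g+f$ a permutation exactly when, for each $y \in \im{f}$, one picks an unordered pair $\{u_y, v_y\} \subseteq W$ with $u_y \ne v_y$ such that the $q$ targets $\{y + u_y,\, y + v_y\}$ partition $\grp{G}$. Viewed as a bipartite $b$-matching on parts $\im{f}$ (demand $2$) and $\grp{G}$ (demand $1$), with edge $y \sim t$ iff $t - y \in W$, existence of such a $g$ is equivalent, via Hall's theorem for $b$-matchings, to
\[
\#(W + S) \;\ge\; 2\,\#S \quad \text{for every } S \subseteq \im{f}.
\]

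The next step is to construct $W$ of size at most $\lceil 2\sqrt{q}\rceil - 1$ satisfying this condition. I would build $W$ greedily: start from $W_0 = \{0\}$, and at each stage either Hall's condition already holds for $W_k$ (and we stop with a valid $g$), or identify a tight set $S$ and adjoin some $v \notin W_k + (S - S)$, so that $v + S$ is disjoint from $W_k + S$ and the shifted set grows by at least $\#S$. A double-counting/averaging argument -- comparing the $\#W \cdot \#\im{f}$ edges of the bipartite graph against the $q$ total demands -- caps the iteration count at roughly $2\sqrt{q}$, giving the $\lceil 2\sqrt{q}\rceil - 1$ bound.

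The main obstacle is controlling the delicate middle range $\#S \approx \sqrt{q}$. For small $\#S$ the Hall condition is automatic from $\#(W+S) \ge \#W \ge 2\#S$; for $\#S$ near $q/2$ it reduces to $W + \im{f} = \grp{G}$, which a sufficiently spread $W$ of size $\sqrt{q}$ already achieves. In between, one must prove that each greedy addition enlarges $\#(W+S)$ fast enough to reach $2\#S$ within $O(\sqrt{q})$ iterations, potentially via a Pl\"unnecke-type sumset inequality or a tailored potential function tracking the tight sets -- this is the step I expect to require the most technical care.

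For the perfect-square improvement, $\lceil 2\sqrt{q}\rceil - 1 = 2\sqrt{q} - 1$, and shaving one more element requires an extremal analysis: any instance forcing the algorithm to use all $2\sqrt{q} - 1$ elements must exhibit rigid additive structure (for example, $W$ and the final tight $S$ arranged nearly as a translate of a subgroup), which can then be resolved by a direct local modification of the partial assignment, saving one element and yielding the bound $2\sqrt{q} - 2$.
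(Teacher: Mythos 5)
Your reformulation via Hall's theorem for $b$-matchings is sound as far as it goes: a value set $W$ admits a suitable $g$ (with $g+f$ a permutation and $\im{g}\subseteq W$) precisely when $\#(W+S)\ge 2\#S$ for all $S\subseteq\im{f}$, and this is a legitimate alternative lens on Problem \ref{pb:pres_table}. But the proof stops exactly where the theorem's content begins. The entire quantitative burden --- showing that a set $W$ of size $\left\lceil 2\sqrt{q}\right\rceil-1$ (and $2\sqrt{q}-2$ for square $q$) satisfying the Hall condition always exists --- is deferred to a greedy scheme whose key step you yourself flag as unproven: you give no argument that adjoining one element to $W$ enlarges $\#(W+S)$ fast enough in the middle range $\#S\approx\sqrt{q}$, no potential function, and no sumset inequality that actually closes the loop, only the suggestion that ``a Pl\"unnecke-type inequality'' might work. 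Likewise the perfect-square refinement is asserted via an unspecified ``rigid additive structure'' and ``local modification''; nothing is proved, and it is not even clear that the extremal configurations you posit are the only obstructions. As written, the argument establishes no explicit bound on $\pres(f)$ at all.

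For contrast, the paper never needs Hall's theorem or sumset estimates: it runs Algorithm \ref{alg:average} on the residual $n_{k-1}\times n_{k-1}$ square table (with $n_0=q/2$), where a pigeonhole argument guarantees some value occurring at least $\mu_{k-1}=\lceil n_{k-1}^2/q\rceil$ times; selecting it removes $\mu_{k-1}$ rows and columns, giving the recurrence $n_k=n_{k-1}-\lceil n_{k-1}^2/q\rceil$ and the bound $\pres(f)\le 1+k+n_k$ after finishing with the diagonal of $M_k$. An induction shows $n_k<q/(k+2)$, and minimizing $1+k+q/(k+2)$ at $k=\sqrt{q}-2$ yields $2\sqrt{q}-1$ as a strict bound (hence $2\sqrt{q}-2$ for square $q$), with a short floor/ceiling comparison handling non-square $q$. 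If you want to salvage your route, you would need to supply a concrete expansion lemma of comparable strength --- e.g.\ that after $m$ greedy additions every tight set has $\#(W+S)$ growing by an explicit amount --- and it is not evident this can be done with the precision needed to recover the constant $2$ in $2\sqrt{q}$, let alone the $-2$.
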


\begin{theorem}\label{th:2to1bound_odd}
If $\# \grp{G}=q$ is odd and $f:\grp{G}\to \grp{G}$ such that $f(0)=0$ and $f$ is two-to-one on the nonzero elements. Then 
\[\pres(f)\le \left\lceil 2\sqrt{q-1}\right\rceil-1.\]
When $q-1$ is a perfect square, the bound can be improved to 
\[\pres(f)\le  2\sqrt{q-1}-2.\]
\end{theorem}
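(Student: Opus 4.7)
The plan is to parallel the proof of Theorem~\ref{th:2to1bound_even}, exploiting the fixed point $f(0)=0$ to reduce the odd-order situation to the even-case combinatorics on a set of size $q-1$. The strategy is to set $g(0)=0$, so that $(g+f)(0)=0$ is already accounted for, and then to produce $g$ on $\grp{G}\setminus\{0\}$ taking values in a small set $S\subseteq \grp{G}$ with $0\in S$, so that $g+f$ maps $\grp{G}\setminus\{0\}$ bijectively onto $\grp{G}\setminus\{0\}$. Under this setup $V(g)\le \#S$, and $g+f$ becomes a permutation of $\grp{G}$.

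First I would enumerate the $(q-1)/2$ collision pairs $\{x_1^{(i)},x_2^{(i)}\}$ of $f|_{\grp{G}\setminus\{0\}}$, with common image $y_i$, and apply the averaging algorithm of Section~\ref{sc:alg_avg} to this pair structure, taking the target set to be $\grp{G}\setminus\{0\}$ in place of $\grp{G}$. For each pair the algorithm selects two distinct elements $s,s'\in S$, contributing $\{y_i+s,y_i+s'\}$ to the image of $g+f$, and the requirement is that summed over all pairs these contributions partition $\grp{G}\setminus\{0\}$. The counting/averaging argument that produced $\#S\le \lceil 2\sqrt{q}\rceil-1$ in the even case, applied now with $q-1$ slots to fill from $(q-1)/2$ pairs, yields $\#S\le \lceil 2\sqrt{q-1}\rceil-1$, and its sharpening in the perfect-square case gives $\#S\le 2\sqrt{q-1}-2$.

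The main point needing care, and where I expect the only real friction, is checking two compatibility conditions that were automatic in the even case: first, that one can insist $0\in S$ without inflating $V(g)$; and second, that no pair's contribution collides with the reserved value $0$, that is, $g(x)\neq -f(x)$ for every nonzero $x$. The second condition removes at most one candidate shift per pair from the averaging argument's pool, and since each pair still retains at least $\#S-1$ available shifts while only two are required per pair, the slack in the averaging estimate absorbs this loss without changing the bound. Assembling the pair contributions with $g(0)=0$ then produces the desired permutation $g+f$, completing the proof.
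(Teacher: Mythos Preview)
Your high-level plan---fix $g(0)=0$ and reduce to an averaging argument on a structure of size $q-1$---is exactly what the paper does. But your ``compatibility conditions'' paragraph shows you have not identified the observation that makes the reduction clean, and the ``slack absorbs the loss'' argument you offer in its place is both unnecessary and not rigorous as written.

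Here is what the paper actually does, and what you are missing. Run Algorithm~\ref{alg:average} as stated: the diagonal initialization chooses the entry $m_{0,0}=0$ (handling the unique column in $P_1=\{0\}$ completely) together with one $0$-entry from each column in $P_2$. The residual table $M_0$ then has rows indexed by $P_0$ (non-images of $f$) and columns indexed by $P_2$ (nonzero images), so $n_0=\#P_0=(q-1)/2$. The crucial point is that $P_0$ and $P_2$ are disjoint, hence every entry $r-c$ of $M_0$ is nonzero. Consequently the averaging at each step is legitimately taken over the $q-1$ elements of $\grp{G}\setminus\{0\}$, giving $\mu_{k-1}=\lceil n_{k-1}^2/(q-1)\rceil$. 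With $s_k=n_k/(q-1)$ one has $s_0=1/2$ and the recursion is identical to \eqref{eq:recurrence_s} with $q$ replaced by $q-1$; the rest of the even-case proof carries over verbatim.

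With this observation both of your worries dissolve automatically. First, $0\in S$ is free from the initialization $S_0=\{0\}$, and no later $v_i$ can equal $0$ because $0$ simply does not occur in $M_0$ (or any $M_k\subseteq M_0$). Second, all entries selected after initialization lie in rows indexed by $P_0\subseteq\grp{G}\setminus\{0\}$, so $(g+f)(x)=0$ for nonzero $x$ never arises; there is no forbidden shift to excise and no slack to invoke. Your framing in terms of ``removing one candidate shift per pair'' does not match how the incremental averaging construction actually works, and would not by itself justify replacing $q$ by $q-1$ in the recursion.
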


In Section \ref{sc:alg_ex}, we discuss some conditions when a $k$-subset of $\grp{G}$ could be a candidate of the image of $g:\grp{G}\to \grp{G}$ such that $g+f$ is a permutation, and give an algorithm to test this. Finally, we close the article by generalizing the IP in Section \ref{sc:alg_IP} into a formulation that can optimize DU. This IP has a large number of variables and constraints, but it can combine both requirements of $\pres(f)$ and DU, and we believe it has significant potential to be used/adapted to create examples under other optimal
measurements.  We also give some computational results concerning $\pres(x^d)$
over $\ff{q}$ with $\gcd(d,q-1)>1$ in an appendix.

\section{The subtraction table}\label{sc:sub_tb}

Let $\# \grp{G}=q$ and $f:\grp{G}\to \grp{G}$.
The \textit{subtraction table of $f$} is a table $M_{f}$ with $q$ rows and $V(f)$ columns. The rows of $M_{f}$ are indexed by the $q$ elements of $\grp{G}$, and the columns are indexed by the $V(f)$ elements of $\im{f}$. For $r\in \grp{G}$ and $c\in \im{f}$, the entry $m_{r,c}$ of $M_{f}$ at row $r$, column $c$, is defined by $m_{r,c}:=r-c$. 

For $t\in\itr$, $0\le t\le u(f)$, denote the set of elements of $\grp{G}$ with exactly $t$ preimages under $f$ by \[P_{t}:=\{b\in \grp{G} \,:\, \# \preim(f,b)=t\}.\] In particular, $P_{0}$ is the set of all non-images of $f$, and $\im{f}=\bigcup_{t=1}^{u(f)}P_{t}$. We order the columns of $M_f$ by $P_{1},P_{2},\dots,P_{u(f)}$, and the rows by $P_{1},P_{2},\dots,P_{u(f)}, P_{0}$, where elements within the same set $P_{t}$ can be listed in arbitrary order but in the same way for both rows and columns. With this ordering, the diagonal of the upper part of $M_f$ indexed by $P_{1},P_{2},\dots,P_{u(f)}$ is all $0$. 
This table is useful when working with the sum $g+f$ for some $g:\grp{G}\to \grp{G}$, since if $g(x)+f(x)=b$ for some $x,b\in  \grp{G}$, then the value of $g(x)$ is the value of the corresponding entry $m_{b,f(x)}$ of $M_{f}$. 

For the remaining, we call a collection of entries of $M_{f}$ a \textit{subtable of $M_{f}$}. We do not assume that a subtable is a block, unless otherwise specified. Next, we define a family of subtables of $M_{f}$ important for our algorithms. Let $S$ be a $k$-subset of $\grp{G}$, and $\mathcal{A}$ be a collection of entries of $M_{f}$. We call $\mathcal{A}$ an \textit{admissible subtable of $M_{f}$ with value set $S$} if it satisfies the following conditions:

\begin{itemize}
\item[(A1)] $S$ is the set of values of the entries in $\mathcal{A}$.
\item[(A2)] For every $c\in \im{f}$, there are exactly $ \# \preim(f,c)$ distinct $r$ such that $m_{r,c}\in\mathcal{A}$, i.e., every column $c$ of $M_f$ has exactly $\#\preim(f,c)$ distinct entries in $\mathcal{A}$. 
\end{itemize}

We may simply use the term ``admissible subtable'' when there is no danger of confusion. Condition (A2) implies that $\mathcal{A}$ has exactly $\sum_{c\in\im{f}} \# \preim(f,c)=q$ entries. Note that for (A1) and (A2) to be true, we must have $k\ge u(f)$ since every element of $\grp{G}$ appears exactly once in each column of $M_f$. 

Admissible subtables are crucial for our algorithms since each of them corresponds to a set of functions with known image sets. This correspondence is presented in the following lemma and its proof. 

\begin{lemma}\label{lm:subtb_correspondence}
Let $f:\grp{G}\to \grp{G}$ and let $M_{f}=(m_{r,c})$ be its subtraction table. Let $S$ be a $k$-subset of $\grp{G}$ such that $k\ge u(f)$. Then there is a one to $\prod_{t=1}^{u(f)}t!^{\# P_{t}}$ correspondence between the set of all admissible subtables of $M_{f}$ with value set $S$, and the set of all functions $g:\grp{G}\to \grp{G}$ such that
\begin{enumerate}[label=(\alph*)]
\item $\im{g}=S$; 
\item $g$ is injective on $\preim(f,c)$ for every $c\in\im{f}$, i.e., $g(x)\neq g(y)$ whenever $x\neq y$ and $f(x)=f(y)$. 
\end{enumerate}

Moreover, if two functions $g$ and $h$ correspond to the same admissible subtable, then $\im{g}=\im{h}$, and $\im{g+f}=\im{h+f}$. 
\end{lemma}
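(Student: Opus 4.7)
The plan is to construct the correspondence explicitly in both directions. In the forward direction, given $g:\grp{G} \to \grp{G}$ satisfying conditions (a) and (b), I would define
\[\mathcal{A}(g) := \{(g(x)+f(x),\, f(x)) : x \in \grp{G}\}\]
as a set of positions in $M_f$. Since the entry at position $(g(x)+f(x), f(x))$ has value $g(x)$, the value set of $\mathcal{A}(g)$ equals $\im{g}=S$, yielding (A1). Condition (b) makes the rows $\{g(x)+c : x \in \preim(f,c)\}$ distinct for each $c \in \im{f}$, so column $c$ contributes exactly $\#\preim(f,c)$ entries, yielding (A2).

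For the reverse direction, given an admissible subtable $\mathcal{A}$ with value set $S$, I would let $R_c := \{r \in \grp{G} : (r,c) \in \mathcal{A}\}$ for each $c \in \im{f}$; by (A2), $\#R_c = \#\preim(f,c)$. Any collection of bijections $\varphi_c : \preim(f,c) \to R_c$ produces a function $g$ via $g(x) := \varphi_{f(x)}(x) - f(x)$. A direct check shows that $g$ satisfies (a) and (b), and that applying the forward construction recovers $\mathcal{A}$ from this $g$.

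To fix the multiplicity, I would argue that two functions $g, h$ satisfying (a) and (b) produce the same subtable if and only if, for each $c \in \im{f}$, they restrict on the fibre $\preim(f,c)$ to bijections onto the same image set $\{r-c : r \in R_c\}$. Since any bijection $\preim(f,c) \to R_c$ is admissible and the choices are independent across columns, the number of $g$'s producing a given $\mathcal{A}$ equals
\[\prod_{c \in \im{f}} \bigl(\#\preim(f,c)\bigr)! \;=\; \prod_{t=1}^{u(f)} t!^{\# P_t},\]
which is the stated multiplicity. The final claim is then immediate: $\im{g} = S$ by (A1), and $\im{g+f} = \bigcup_{c \in \im{f}} R_c$ is the set of row indices appearing in $\mathcal{A}$, so neither depends on the choice of the $\varphi_c$'s.

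The main obstacle is essentially bookkeeping rather than anything deep: the crucial points are to verify that $\mathcal{A}(g)$ has exactly $q$ distinct positions (which uses (b) to prevent collisions within a column) and that the fibres of the forward map all have uniform size $\prod_{t} t!^{\# P_t}$, so that the claimed one-to-many correspondence is exact and not merely an upper bound.
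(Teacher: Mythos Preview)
Your proposal is correct and follows essentially the same approach as the paper: both define the forward map by sending $g$ to the set of positions $(g(x)+f(x),f(x))$ in $M_f$, both build the reverse map by choosing, for each column $c$, a bijection from $\preim(f,c)$ to the $\#\preim(f,c)$ rows selected in that column, and both obtain the multiplicity $\prod_{c\in\im{f}}(\#\preim(f,c))!=\prod_{t=1}^{u(f)}t!^{\#P_t}$ by counting these independent bijection choices. Your formulation via the explicit bijections $\varphi_c$ is slightly more formal than the paper's, but the argument is the same.
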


\begin{proof}
Let $g:\grp{G}\to \grp{G}$ be a function that satisfies both conditions (a) and (b). We associate to $g$ a subtable $\mathcal{A}$ by choosing $m_{r,c}\in\mathcal{A}$ if and only if there is an $x\in \grp{G}$ such that $f(x)=c$ and $g(x)+f(x)=r$. By the definition of $M_{f}$, this implies that $g(x)=r-c=m_{r,c}$. Since every column is a complete set of $\grp{G}$, we may always find such $m_{r,c}$ for every $x\in \grp{G}$. So condition (A1) is satisfied. Now suppose $x_1\neq x_2$ and $f(x_1)=f(x_2)=c$ for some $c\in\im{f}$. Let $r_1=g(x_1)+c$ and $r_2=g(x_2)+c$. Then since $g$ is injective on the set $\preim(f,c)$, we have $r_1\neq r_2$. Therefore, for all $c\in\im{f}$, every $x\in \preim(f,c)$ determines a unique entry of $\mathcal{A}$, which means (A2) is satisfied. 

Conversely, let $\mathcal{A}$ be an admissible subtable with value set $S$. For each $c\in\im{f}$, since by (A2) there are exactly $\# \preim(f,c)$ entries of column $c$ in $\mathcal{A}$, we can define a function $g:\grp{G}\to \grp{G}$ that satisfies conditions (a) and (b) by setting $g(x)=m_{r,c}\in\mathcal{A}$ for a unique $r$ for each $x\in \preim(f,c)$. Clearly, there are $\# \preim(f,c)!$ ways to define $g$ like this on the set $\preim(f,c)$. Hence, the number of such $g$ is
\begin{equation*}
\begin{aligned}
\prod_{c\in\im{f}}\# \preim(f,c)!=\prod_{t=1}^{u(f)}\prod_{c\in P_{t}}t!=\prod_{t=1}^{u(f)}t!^{\# P_{t}}.
\end{aligned}
\end{equation*}

Finally, by our construction, $\im{g+f}=\{m_{r,c}+f(x)\,:\, m_{r,c}\in\mathcal{A}\}=\{r\in \grp{G}\,:\, m_{r,c}\in\mathcal{A}\}$. Therefore, the image sets of $g$ and $g+f$ only depend on $\mathcal{A}$. 
\end{proof}

For an admissible subtable $\mathcal{A}$, we define
\[\range(\mathcal{A}):=\{r\in \grp{G}\,:\, m_{r,c}\in\mathcal{A}\}.\]
That is, $\range(\mathcal{A})$ is the set of indices of the rows of $M_{f}$
which have at least one entry in $\mathcal{A}$. As shown in the proof of Lemma \ref{lm:subtb_correspondence}, $\range(\mathcal{A})$ is equal to $\im{g+f}$ for any corresponding $g$. In particular, if $\range(\mathcal{A})=\grp{G}$, then any corresponding $g$ satisfies $\im{g+f}=\grp{G}$ and therefore $g+f$ is a permutation. This gives the following useful corollary. 
\begin{cor}\label{co:ub_ppres_subtb_correspondence}
If $f:\grp{G}\to \grp{G}$, then $\pres(f)\le k$ if and only if there exists an admissible subtable of $M_{f}$ whose range is $\grp{G}$ and value set is a $k$-subset of $\grp{G}$. 
\end{cor}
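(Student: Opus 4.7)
The plan is to apply Lemma \ref{lm:subtb_correspondence} in both directions, leveraging the identification $\range(\mathcal{A})=\im{g+f}$ that was established at the end of its proof. Under this identification, the condition ``$g+f$ is a permutation'' becomes ``$\range(\mathcal{A})=\grp{G}$'', and $V(g)=\#\im{g}$ is exactly the size of the value set of the corresponding admissible subtable. So the corollary is essentially a repackaging of the lemma in the language of $\pres$.

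For the forward direction, I would assume $\pres(f)\le k$ and invoke the definition \eqref{eq:def_ppres}: there exists $g:\grp{G}\to\grp{G}$ with $V(g)\le k$ such that $g+f$ is a permutation. Because $g+f$ is injective, $g$ is automatically injective on each preimage class $\preim(f,c)$: if $x_1\ne x_2$ with $f(x_1)=f(x_2)$ and $g(x_1)=g(x_2)$ then $(g+f)(x_1)=(g+f)(x_2)$, contradicting injectivity. So $g$ satisfies both conditions (a) and (b) of Lemma \ref{lm:subtb_correspondence} with $S=\im{g}$, and the lemma produces an admissible subtable $\mathcal{A}$ with value set $S$ (a subset of $\grp{G}$ of size $V(g)\le k$) and with $\range(\mathcal{A})=\im{g+f}=\grp{G}$.

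For the converse, I would start with an admissible subtable $\mathcal{A}$ whose value set $S$ has size at most $k$ and whose range equals $\grp{G}$. Lemma \ref{lm:subtb_correspondence} produces a corresponding function $g$ with $\im{g}=S$, so $V(g)=\#S\le k$, and with $\im{g+f}=\range(\mathcal{A})=\grp{G}$, i.e., $g+f$ is a permutation. By the definition of $\pres$, this immediately yields $\pres(f)\le V(g)\le k$.

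There is essentially no obstacle to overcome, since the heavy lifting---the construction of the correspondence and the verification that $\range(\mathcal{A})=\im{g+f}$---was done in Lemma \ref{lm:subtb_correspondence}. The only point deserving care is the mild mismatch between ``$\pres(f)\le k$'' and ``value set is a $k$-subset'': the witness $g$ for $\pres(f)\le k$ may satisfy $V(g)<k$, but the corresponding admissible subtable then has a value set of that smaller size, which still witnesses $\pres(f)\le k$. One can either interpret ``$k$-subset'' here as ``subset of size at most $k$'', or note that the extremal case $V(g)=\pres(f)$ already delivers a subtable of exactly the correct size.
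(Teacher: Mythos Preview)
Your argument is correct and follows essentially the same route as the paper: both directions are obtained from Lemma~\ref{lm:subtb_correspondence} together with the observation that injectivity of $g+f$ forces $g$ to be injective on each $\preim(f,c)$, and the identification $\range(\mathcal{A})=\im{g+f}$. Your version is slightly more explicit about the converse direction and the ``$\le k$ versus $=k$'' issue than the paper's proof, but the underlying approach is identical.
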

\begin{proof}
By the definition of $\pres(f)$ in the form of (\ref{eq:def_ppres}), $\pres(f)\le k$ if and only if there exists a function $g:\grp{G}\to \grp{G}$ such that $V(g)=k$ and $g+f$ is a permutation. This implies that $g$ is injective on $\preim(f,c)$ for all $c\in \grp{G}$, since otherwise we would have $(g+f)(x)=(g+f)(y)$ for some $x,y\in\preim(f,c)$. By Lemma \ref{lm:subtb_correspondence}, $g$ corresponds to an admissible subtable $\mathcal{A}$ with value set $\im{g}$ and $\range(\mathcal{A})=\im{g+f}=\grp{G}$. 
\end{proof}

By Corollary \ref{co:ub_ppres_subtb_correspondence}, the problem of determining $\pres(f)$ can be rephrased as follows. 
\begin{prob}\label{pb:pres_table}
Given $f:\grp{G}\to \grp{G}$, find an admissible subtable $\mathcal{A}$ of $M_f$ such that $\range(\mathcal{A})=\grp{G}$ and the value set $S$ has the minimum cardinality.
\end{prob}

We close this section by considering $f(x)=x^2$ over $\ff{9}$, giving
$M_{f}$, an example of admissible subtable, and the correspondence described in Lemma \ref{lm:subtb_correspondence}. 

\begin{example}\label{ex:mf_tables}
Let $\alpha$ be a primitive element of $\ff{9}$ which is a root of $x^2+2x+2$. Let $f(x)=x^2\in\ffx{9}$. Then 
$\im{f}=\{0,\pm 1, \pm\alpha^2\}$ (note that $\alpha^{4}=-1$), and $V(f)=5$. The preimage sets are the following: $\preim(f,0)=\{0\}$, $\preim(f,1)=\{\pm 1\}$, $\preim(f,\alpha^2)=\{\pm\alpha\}$, $\preim(f,-1)=\{\pm \alpha^2\}$, and $\preim(f,-\alpha^2)=\{\pm\alpha^3\}$. Hence, $P_{0}=\{\pm\alpha,\pm\alpha^3\}$, $P_{1}=\{0\}$, and $P_{2}=\{\pm 1, \pm\alpha^2\}$. 

The subtraction table $M_{f}$ is shown in Table \ref{tb:sub_x2_adm}. The shaded entries in Table \ref{tb:sub_x2_adm} provide an admissible subtable $\mathcal{A}$ of $M_{f}$ with value set $S=\{0,1,\alpha\}$. 
\begin{table}[h]
\caption{\label{tb:sub_x2_adm} The subtraction table $M_{f}$ of $f=x^2$ over $\ff{9}$ and an admissible subtable. The first two columns and rows label the coordinates of $M_{f}$. }
\begin{center}
\begin{tabular}{|c|c|c c c c c|}
\hline
\multicolumn{2}{|c|}{\multirow{2}{*}{\backslashbox{$r\searrow$}{$c\searrow$}}} & \multicolumn{1}{|c|}{$P_{1}$} &\multicolumn{4}{c|}{$P_{2}$}\\\cline{3-7}
 \multicolumn{2}{|c|}{}&\multicolumn{1}{|c|}{$0$} & $1$ & $\alpha^2$ & $-1$ & $-\alpha^2$ \\\hline
$P_{1}$& $0$ & \cg$0$ & $-1$ & $-\alpha^2$ & $1$ & $\alpha^2$ \\\cline{1-2}
\multirow{4}{*}{$P_{2}$} & $1$&$1$ & $0$ & $-\alpha$ & $-1$ & $-\alpha^3$\\ 
 & $\alpha^2$ & $\alpha^2$ & \cg$\alpha$ & \cg$0$ & $-\alpha^3$ & $-\alpha^2$ \\
 & $-1$ & $-1$ & \cg$1$& $\alpha^3$ & \cg$0$ & $\alpha$\\
 & $-\alpha^2$ & $-\alpha^2$ & $\alpha^3$ & $\alpha^2$ & $-\alpha$ & \cg$0$\\\cline{1-2}
\multirow{4}{*}{$P_{0}$} & $\alpha$& $\alpha$ & $-\alpha^3$ & $-1$ & $\alpha^2$ & $\alpha^3$\\ 
 & $\alpha^3$ & $\alpha^3$ & $-\alpha$ & $\alpha$ & $-\alpha^2$ & $-1$ \\
 & $-\alpha$ & $-\alpha$ & $-\alpha^2$ & $-\alpha^3$ & $\alpha^3$ & \cg$1$\\
 & $-\alpha^3$ & $-\alpha^3$ & $\alpha^2$ & \cg$1$ & \cg$\alpha$ & $-\alpha$\\\hline
\end{tabular}
\end{center}
\end{table}

A function $g$ that corresponds to $\mathcal{A}$ can be defined by $g(0)=0$; $g(1)=\alpha$, $g(-1)=1$, $g(\alpha)=0$, $g(-\alpha)=1$, $g(\alpha^2)=0$, $g(-\alpha^2)=\alpha$, $g(\alpha^3)=0$, and $g(-\alpha^3)=1$. Switching any values within the same preimage set, for example, take $g(1)=1$, $g(-1)=\alpha$ instead, does not change $\im{g}=S$ and $\im{g+f}=\{0,\alpha^2,-1,-\alpha^2,-\alpha,-\alpha^3\}=\range(\mathcal{A})$. 
\end{example}

\section{Linear Integer Programming Approach}\label{sc:alg_IP}

As stated in Problem \ref{pb:pres_table}, determining $\pres(f)$ of a function $f:\grp{G}\to\grp{G}$ is equivalent to finding an admissible subtable $\mathcal{A}$ of $M_f$ whose value set has the minimum cardinality. This problem can be phrased as the following binary linear integer program: 


\begin{align}
&\text{minimize:}\qquad   \sum_{v\in \grp{G}}y_{v} \notag\\
&\text{subject to:} \notag\\
& \sum_{c\in\im{f}}x_{r,c}=1, \text{ for all }r\in \grp{G},\label{eq:IP_row}\\
& \sum_{r\in \grp{G}}x_{r,c}=\# \preim(f,c), \text{ for all }c\in\im{f},\label{eq:IP_col}\\
& x_{r,c}\le y_{v}, \text{ for all }r\in \grp{G},\,c\in\im{f},\,v\in \grp{G}\text{ such that }r-c=v,\label{eq:IP_countxy}\\
& x_{r,c},y_{v}\in\{0,1\}, \text{ for all }r\in \grp{G},\,c\in\im{f},\,v\in \grp{G}. \notag
\end{align}

We associate every entry $m_{r,c}$ with a $\{0,1\}$-valued variable $x_{r,c}$. These variables record the coordinates of the entries of $\mathcal{A}$. If $m_{r,c}\in \mathcal{A}$, then $x_{r,c}=1$; otherwise, $x_{r,c}=0$. Note that by (A2), an admissible subtable must have exactly $q$ entries. So $\range(\mathcal{A})=\grp{G}$ if and only if each row has exactly one entry in $\mathcal{A}$. Thus, we add constraints (\ref{eq:IP_row}) to make sure that $\range(\mathcal{A})=\grp{G}$. Similarly, for each column $c\in\im{f}$, we require (\ref{eq:IP_col}) so that each column $c$ has exactly $\# \preim(f,c)$ entries in $\mathcal{A}$. 
To count $\# S$, we associate a $\{0,1\}$-valued variable $y_{v}$ for every $v\in \grp{G}$. If $v\in S$, then $y_{v}=1$; otherwise, $y_{v}=0$. Hence, the objective is to minimize $\sum_{v}y_{v}$. Finally, we add the constraints (\ref{eq:IP_countxy}), which force $y_{v}=1$ whenever any entry $m_{r,c}$ with value $v$ is chosen in $\mathcal{A}$. 

This IP has a total of $q(V(f)+1)$ binary variables, $q+V(f)$ equality constraints, and $qV(f)$ inequality constraints. For functions over a finite field of order $q$, we use Magma \cite{magma} to generate $M_f$, and use Gurobi via Python interface \cite{gurobi} to solve the IP. Some computational results using
the algorithm are given in an appendix.

\section{Algorithm for Constructing an Admissible Subtable}\label{sc:alg_avg}

We now present an algorithm for constructing an admissible subtable $\mathcal{A}$ such that $\range(\mathcal{A})=\grp{G}$.
This gives a feasible solution of the IP in Section \ref{sc:alg_IP} and thus an upper bound for $\pres(f)$. We first describe the algorithm for a general $f$, and then focus on the special case when $f$ is two-to-one, a specific class
of functions with strong connections to functions with optimal DU. 

\subsection{General case}\label{ssb:alg_a_general}
Let $\# \grp{G}=q$, $f:\grp{G}\to \grp{G}$ and $u(f)=u$. The main idea of the algorithm is to iteratively choose a value $v$ that appears at least the average number of times in the subtraction table, and then append those entries with value $v$ to $\mathcal{A}$. 

The algorithm is described below, followed by an explanation.
We use $()$ to describe a table or subtable, and $\{\}$ to describe a set. For example, $M_f=(m_{r,c} \,:\, r\in \grp{G},c\in\im{f})$ is the subtraction table that has $qV(f)$ entries, and $S=\{m_{r,c} \,:\, r\in \grp{G},c\in\im{f}\}$ is the set of all distinct values of the entries $m_{r,c}$ that has $q$ elements. 

\begin{algorithm}[H]\caption{Constructing an admissible subtable for a given function $f$. }\label{alg:average}
\begin{itemize}
\item[\textbf{Input:}] $M_{f}=(m_{r,c}\,:\, r\in \grp{G},c\in\im{f})$, some positive integer $k\le q$. 
\item[\textbf{Initialize:}] $S_{0}=\{0\}$, $\mathcal{A}_{0}=(m_{c,c}\,:\, c\in\im{f})$, $M_{0}$ as shown in Table \ref{tb:alg_a_m0} of size $n_0\times n_0$, where $n_{0}=\# P_0=q-V(f)$. 

\item[\textbf{Step i.}] For $1\le i\le k$, repeat the following steps as long as $n_{i-1}>0$: 
\begin{enumerate}
\item Let $\mu_{i-1}$ be the average number of appearance in $M_{i-1}$ over all elements of $\grp{G}$, round up to the nearest integer. Pick one element $v_i\in\grp{G}$ such that $v_{i}$ has at least $\mu_{i-1}$ appearance in $M_{i-1}$. 
\item Let $B_{i}$ be a subtable of $M_{i-1}$ that collects $\mu_{i-1}$ entries with value $v_{i}$. If more than one column has $v_{i}$ in the same row, only the one from the smallest second index may be selected. 
\item Let $M_{i}$ be the table obtained from $M_{i-1}$ by deleting every row and column that has an entry in $B_{i}$. 
\item Let $S_{i}=S_{i-1}\cup\{v_{i}\}$, $\mathcal{A}_{i}=\mathcal{A}_{i-1}\cup B_i$ and $n_{i}=n_{i-1}-\mu_{i-1}$. 
\end{enumerate}
\item[\textbf{Return:}] $S=S_{k}\cup\{m_{c,c}\in M_{k}\}$, $\mathcal{A}=\mathcal{A}_k\cup (m_{c,c}\in M_{k})$, where $(m_{c,c}\in M_{k})$ is the diagonal of $M_k$. 
\end{itemize}
\end{algorithm}
To simplify the arguments, we say a row or column of $M_f$ is in $P_j$ for some $0\le j\le u$, if it is indexed by an element in $P_j$. To construct an admissible subtable $\mathcal{A}$, we need to choose $j$ entries from each column of $M_f$ that is in $P_{j}$ for some $1\le j\le u$ to satisfy condition (A2). To make sure that $\range(\mathcal{A})=\grp{G}$, each row of $M_f$ has to be chosen exactly once. We first choose the entry with value $0$ from each column. By our definition of $M_f$, this is the diagonal of the upper part of $M_f$ whose rows and columns are indexed by elements of $\im{f}$. Thus, our initial subtable is $\mathcal{A}_0=(m_{c,c}\,:\, c\in\im{f})$ with value set $S_{0}=\{0\}$. 

Now, each row in $P_1,\dots,P_u$ in $M_f$ has been chosen once, so the remaining entries should be chosen from the rows in $P_0$. Also, each column has been chosen once, so $j-1$ more entries should be chosen from every column in $P_j$ for all $2\le j\le u$. This is equivalent to making a selection from each row and each column exactly once from the table $M_0$ that is shown in Table \ref{tb:alg_a_m0}. The table $M_0$ is constructed by gluing $j-1$ copies of the block of the subtable of $M_{f}$ whose rows are in $P_{0}$ and columns are in $P_{j}$ for all $2\le j\le u$. We assign each block a second index for the columns. For example, $P_{j,1},\dots,P_{j,j-2},P_{j,j-1}$ are the $j-1$ copies of $P_j$. 

\begin{table}[h]\caption{The initial table $M_{0}$ for the general case}\label{tb:alg_a_m0}
\begin{center}
\begin{tabular}{c|p{2cm}|p{1.5cm}|p{1.5cm}|c|p{1cm}|c|p{1cm}|p{.8cm}|c|p{.8cm}|}
\multicolumn{1}{c}{}&\multicolumn{1}{c}{$P_{2,1}$}&\multicolumn{1}{c}{$P_{3,1}$}&\multicolumn{1}{c}{$P_{3,2}$}&\multicolumn{1}{c}{$\cdots$}&\multicolumn{1}{c}{$P_{u-1,1}$}&\multicolumn{1}{c}{$\cdots$}&\multicolumn{1}{c}{$P_{u-1,u-2}$}&\multicolumn{1}{c}{$P_{u,1}$}&\multicolumn{1}{c}{$\cdots$}&\multicolumn{1}{c}{$P_{u,u-1}$} \\
\cline{2-11}
$P_{0}$ &\tabincell{r}{\vspace{.3cm}\\$\phantom{v_1}$\\\vspace{.3cm}}&  & &$\cdots$ &  & $\cdots$&  & & $\cdots$& \\\cline{2-11}
\end{tabular}
\end{center}

\end{table}

By definition of the $P_{j}$'s, we have $q=\sum_{j=0}^{u}\# P_{j}=\sum_{j=1}^{u}j\# P_{j}$. Therefore, 
\begin{equation*}
\begin{aligned}
\# P_{0}=\sum_{j=1}^{u}(j-1)\# P_{j}=\sum_{j=2}^{u}(j-1)\# P_{j}.
\end{aligned}
\end{equation*}
Hence, $M_{0}$ is an $n_{0}\times n_{0}$ square table where $n_{0}=\# P_{0}=q-V(f)$. 
To complete the construction of $\mathcal{A}$, we repeat the following steps. First, define the number of appearance of an element $v\in \grp{G}$ in a table $M$ by \[\#\{r\in \grp{G} \,:\, \exists c\in\im{f} \text{ such that } v=m_{r,c}\in M\}.\] Note that it only counts once if $v$ appears in more than one column of the same row. In fact, in that case, all such columns are originally identical copies of the same column when we construct $M_0$. 

Let $\mu_0$ be the average number (round up to the nearest integer) of appearance in $M_{0}$ over all elements $ \grp{G}$. Pick an element $v_{1}\in \grp{G}$ that has at least $\mu_0$ appearance in $M_{0}$ and let the new value set be $S_{1}=S_{0}\cup\{v_{1}\}$. Define a subtable $B_1$ of $M_0$ by choosing exactly $\mu_0$ entries with value $v_1$ from $M_0$. If $u>2$ and $v_1$ appears in more than one column of the same row, only the entry of the column in the smallest second index of $P_{j,i}$'s may be chosen. The new subtable $\mathcal{A}_1$ is then defined by appending $B_1$ to $\mathcal{A}_0$. The new square table $M_1$ is obtained from $M_0$ by deleting the rows and columns that has an entry in $B_1$. For example, suppose $v_1$ is in some row $r$ of blocks $P_{3,1}$ and $P_{3,2}$, as shown in Table \ref{tb:generalAG}. These blocks were initially identical, and therefore the column indices of $v_1$ in these blocks are the same element $c\in P_3$. We delete row $r$ and only the column $c$ in $P_{3,1}$, and append entry $m_{r,c}$ to $\mathcal{A}_0$. 

\begin{table}[h]\caption{An example of the general case. }\label{tb:generalAG}
\begin{center}
\begin{tabular}{c|p{2cm}|p{1.5cm}|p{1.5cm}|c|p{1cm}|c|p{1cm}|p{.8cm}|c|p{.8cm}|}
\multicolumn{1}{c}{}&\multicolumn{1}{c}{$P_{2,1}$}&\multicolumn{1}{c}{$P_{3,1}$}&\multicolumn{1}{c}{$P_{3,2}$}&\multicolumn{1}{c}{$\cdots$}&\multicolumn{1}{c}{$P_{u-1,1}$}&\multicolumn{1}{c}{$\cdots$}&\multicolumn{1}{c}{$P_{u-1,u-2}$}&\multicolumn{1}{c}{$P_{u,1}$}&\multicolumn{1}{c}{$\cdots$}&\multicolumn{1}{c}{$P_{u,u-1}$} \\
\multicolumn{1}{c}{}&\multicolumn{1}{c}{}&\multicolumn{1}{c}{\tabincell{r}{\hspace{.7cm}$c$}}&\multicolumn{1}{c}{\tabincell{r}{\hspace{.7cm}$c$}}&\multicolumn{1}{c}{}&\multicolumn{1}{c}{}&\multicolumn{1}{c}{}&\multicolumn{1}{c}{}&\multicolumn{1}{c}{}&\multicolumn{1}{c}{}&\multicolumn{1}{c}{} \\\cline{2-11}
$P_{0}\quad r$  & & \tabincell{r}{\vspace{.3cm}\\\hspace{1cm}$v_1$\\\vspace{.3cm}} & \tabincell{r}{\vspace{.3cm}\\\hspace{1cm}$v_1$\\\vspace{.3cm}} &$\cdots$ &  & $\cdots$&  & & $\cdots$& \\\cline{2-11}
\end{tabular}
\end{center}
\end{table}

Since exactly $\mu_0$ rows and $\mu_0$ columns are deleted from $M_0$, $M_1$ is an $n_1\times n_1$ square table where $n_1=n_0-\mu_0$. This completes the first step. We repeat this step using $M_1$, $\mathcal{A}_1$, $S_1$ instead, and obtain an $n_2\times n_2$ square table $M_2$, a subtable $\mathcal{A}_2$ with value set $S_2$, and so on. This process can continue as long as the square table $M_{i-1}$ obtained from the previous iteration is nonempty, i.e., $n_{i-1}>0$. Finally, assume that we repeat this step $k$ times and obtain an $n_k\times n_k$ square table $M_k$ and a subtable $\mathcal{A}_k$ with value set $S_k$. An admissible subtable $\mathcal{A}$ can then be completed by appending the diagonal of $M_k$ to $\mathcal{A}_k$, where its value set $S$ is the union of $S_k$ and the values of these diagonal entries. 

We now give an example of the algorithm in action when $f$ is two-to-one (a
class of functions we are especially interested in), see Table \ref{tb:alg1_ex}.
\begin{table}\caption{An example of applying Algorithm \ref{alg:average} for a two-to-one function $f$ over a group $\grp{G}=\{0,a_{1},a_{2},\dots,a_{7}\}$, where $\im{f}=P_{2}=\{0,a_{1},a_{2},a_{3}\}$, $P_{0}=\grp{G}\setminus\im{f}=\{a_{4},a_{5},a_{6},a_{7}\}$. }\label{tb:alg1_ex}
\begin{center}
\begin{tabular}{|m{8cm}|c|}
\hline
\multicolumn{1}{|c|}{The $k$-th step}
&
The table $M_{k-1}$
\\\hline
\textbf{Initialize:} Here is $M_f$. Let $S_{0}=\{0\}$ and $\mathcal{A}_0$ be the shaded entries. $M_{0}$ is the lower half of the table. The side length of $M_{0}$ is $n_{0}=4$. 
&
\begin{tabular}{|c|c|c c c c|}
\hline
\multicolumn{2}{|c|}{\multirow{2}{*}{\backslashbox{$r\searrow$}{$c\searrow$}}}  & \multicolumn{4}{c|}{$P_{2}$}\\\cline{3-6}
 \multicolumn{2}{|c|}{}& $0$ & $a_{1}$ & $a_{2}$ & $a_{3}$ \\\hline
\multirow{4}{*}{$P_{2}$} & $0$ & \cg$0$ & $*$ & $*$ & $*$\\ 
 & $a_{1}$ & $*$ & \cg$0$ & $*$ & $*$ \\
 & $a_{2}$ & $*$ & $*$ & \cg$0$ & $*$\\
 & $a_{3}$ & $*$ & $*$ & $*$ & \cg$0$\\\cline{1-6}
\multirow{4}{*}{\tabincell{c}{$P_{0}$}} & $a_{4}$ & $a_{4}$ & $a_{2}$ & $a_{1}$ & $a_{6}$\\  
 & $a_{5}$ & $a_{5}$ & $a_{7}$ & $a_{3}$ & $a_{2}$ \\
 & $a_{6}$ & $a_{6}$ & $a_{5}$ & $a_{7}$ & $a_{4}$\\
 & $a_{7}$ & $a_{7}$ & $a_{3}$ & $a_{6}$ & $a_{1}$\\\hline
\end{tabular}
\\\hline
\textbf{Step 1:} $\mu_0=\lceil 16/8\rceil=2$, so we can take $S_{1}=\{0,a_{4}\}$, and obtain $\mathcal{A}_{1}$ by adding the shaded entries to $\mathcal{A}_{0}$. $M_{1}$ is the table formed by deleting row $a_{4},a_{6}$ and column $0,a_{3}$, and the side length of $M_{1}$ is $n_{1}=n_0-\mu_0=2$.
&
\begin{tabular}{|c |c|c c c c|}
\hline
\multicolumn{2}{|c|}{\multirow{2}{*}{\backslashbox{$r\searrow$}{$c\searrow$}}}  & \multicolumn{4}{c|}{$P_{2}$}\\\cline{3-6}
 \multicolumn{2}{|c|}{}& $0$ & $a_{1}$ & $a_{2}$ & $a_{3}$ \\\hline
\multirow{4}{*}{\tabincell{c}{$P_{0}$}} & $a_{4}$ & \cg$a_{4}$ & $a_{2}$ & $a_{1}$ & $a_{6}$\\  
 & $a_{5}$ & $a_{5}$ & $a_{7}$ & $a_{3}$ & $a_{2}$ \\
 & $a_{6}$ & $a_{6}$ & $a_{5}$ & $a_{7}$ & \cg$a_{4}$\\
 & $a_{7}$ & $a_{7}$ & $a_{3}$ & $a_{6}$ & $a_{1}$\\\hline
\end{tabular}\\\hline
\textbf{Step 2:} $\mu_1=\lceil 4/8\rceil=1$, so we can take $S_{2}=\{0,a_{4},a_{3}\}$, and obtain $\mathcal{A}_{2}$ by adding the shaded entries to $\mathcal{A}_{1}$. $M_{2}$ is the table formed by deleting row $a_{5}$ and column $a_{2}$, and the side length of $M_{2}$ is $n_{2}=n_1-\mu_1=1$.
&
\begin{tabular}{|c |c| c c|}
\hline
\multicolumn{2}{|c|}{\multirow{2}{*}{\backslashbox{$r\searrow$}{$c\searrow$}}}  & \multicolumn{2}{c|}{$P_{2}$}\\\cline{3-4}
 \multicolumn{2}{|c|}{}& $a_{1}$ & $a_{2}$ \\\hline
\multirow{2}{*}{\tabincell{c}{$P_{0}$}} 
 & $a_{5}$ & $a_{7}$ & \cg$a_{3}$ \\
 & $a_{7}$ & $a_{3}$ & $a_{6}$ \\\hline
\end{tabular}\\\hline
\textbf{Stop ($k=2$):} Now we stop and return $S=\{0,a_{4},a_{3}\}$, $\mathcal{A}=\mathcal{A}_{2}\cup \{\text{the shaded entry}\}$. 
&
\begin{tabular}{|c |c|c|}
\hline
\multicolumn{2}{|c|}{\multirow{2}{*}{\backslashbox{$r\searrow$}{$c\searrow$}}}  & \multicolumn{1}{c|}{$P_{2}$}\\\cline{3-3}
 \multicolumn{2}{|c|}{}& $a_{1}$ \\\hline
\multirow{1}{*}{$P_{0}$} 
 & $a_{7}$ & \cg$a_{3}$  \\\hline
\end{tabular}\\\hline
\textbf{Output:} The shaded entries present the output $\mathcal{A}$. This gives an upper bound $\pres(f)\le \# S=3$. Note that the worst case scenario in (\ref{eq:pres_bound_nk}) gives $1+2+n_{2}=4$.
&
\begin{tabular}{|c|c|c c c c|}
\hline
\multicolumn{2}{|c|}{\multirow{2}{*}{\backslashbox{$r\searrow$}{$c\searrow$}}}  & \multicolumn{4}{c|}{$P_{2}$}\\\cline{3-6}
 \multicolumn{2}{|c|}{}& $0$ & $a_{1}$ & $a_{2}$ & $a_{3}$ \\\hline
\multirow{4}{*}{$P_{2}$} & $0$ & \cg$0$ & $*$ & $*$ & $*$\\ 
 & $a_{1}$ & $*$ & \cg$0$ & $*$ & $*$ \\
 & $a_{2}$ & $*$ & $*$ & \cg$0$ & $*$\\
 & $a_{3}$ & $*$ & $*$ & $*$ & \cg$0$\\\cline{1-6}
\multirow{4}{*}{\tabincell{c}{$P_{0}$}} & $a_{4}$ & \cg$a_{4}$ & $a_{2}$ & $a_{1}$ & $a_{6}$\\  
 & $a_{5}$ & $a_{5}$ & $a_{7}$ & \cg$a_{3}$ & $a_{2}$ \\
 & $a_{6}$ & $a_{6}$ & $a_{5}$ & $a_{7}$ & \cg$a_{4}$\\
 & $a_{7}$ & $a_{7}$ & \cg$a_{3}$ & $a_{6}$ & $a_{1}$\\\hline
\end{tabular}\\\hline

\textbf{Alternative stop ($k=1$):} We may also stop right after Step 1 instead, and return $S=\{0,a_{4},a_{6},a_{7}\}$, $\mathcal{A}=\mathcal{A}_{1}\cup\{\text{the shaded entries}\}$. 
&
\begin{tabular}{|c |c| c c|}
\hline
\multicolumn{2}{|c|}{\multirow{2}{*}{\backslashbox{$r\searrow$}{$c\searrow$}}}  & \multicolumn{2}{c|}{$P_{2}$}\\\cline{3-4}
 \multicolumn{2}{|c|}{}& $a_{1}$ & $a_{2}$ \\\hline
\multirow{2}{*}{\tabincell{c}{$P_{0}$}} 
 & $a_{5}$ & \cg$a_{7}$ & $a_{3}$ \\
 & $a_{7}$ & $a_{3}$ & \cg$a_{6}$ \\\hline
\end{tabular}\\\hline
\textbf{Alternative output:} The shaded entries present the alternative output $\mathcal{A}$ if we stop right after Step 1. This gives an upper bound $\pres(f)\le \# S=4$. In this case, $1+1+n_{1}=4$ so the worst case scenario in (\ref{eq:pres_bound_nk}) is met. 
&
\begin{tabular}{|c|c|c c c c|}
\hline
\multicolumn{2}{|c|}{\multirow{2}{*}{\backslashbox{$r\searrow$}{$c\searrow$}}}  & \multicolumn{4}{c|}{$P_{2}$}\\\cline{3-6}
 \multicolumn{2}{|c|}{}& $0$ & $a_{1}$ & $a_{2}$ & $a_{3}$ \\\hline
\multirow{4}{*}{$P_{2}$} & $0$ & \cg$0$ & $*$ & $*$ & $*$\\ 
 & $a_{1}$ & $*$ & \cg$0$ & $*$ & $*$ \\
 & $a_{2}$ & $*$ & $*$ & \cg$0$ & $*$\\
 & $a_{3}$ & $*$ & $*$ & $*$ & \cg$0$\\\cline{1-6}
\multirow{4}{*}{\tabincell{c}{$P_{0}$}} & $a_{4}$ & \cg$a_{4}$ & $a_{2}$ & $a_{1}$ & $a_{6}$\\  
 & $a_{5}$ & $a_{5}$ & \cg$a_{7}$ & $a_{3}$ & $a_{2}$ \\
 & $a_{6}$ & $a_{6}$ & $a_{5}$ & $a_{7}$ & \cg$a_{4}$\\
 & $a_{7}$ & $a_{7}$ & $a_{3}$ & \cg$a_{6}$ & $a_{1}$\\\hline
\end{tabular}\\\hline
\end{tabular}
\end{center}
\end{table}

We remark that the algorithm may be improved by using a different method to choose $v_t$ in each step $t$. For example, we could use a greedy strategy to select a value $v_t$ with the maximum appearance and append all relevant entries to $\mathcal{A}_t$, instead of taking the average appearance only. See Table \ref{tb:alg1_ex_greedy} for an example. We use the average number here in order to estimate $\# S$ in the next section for the special case of two-to-one functions. 
\begin{table}\caption{An example of applying Algorithm \ref{alg:average} for $f$ in Table \ref{tb:alg1_ex} but replacing each step by a greedy strategy. The initialization step is the same as in Table \ref{tb:alg1_ex}. }\label{tb:alg1_ex_greedy}
\begin{center}
\begin{tabular}{|m{8cm}|c|}
\hline
\multicolumn{1}{|c|}{The $k$-th step}
&
The table $M_{k-1}$
\\\hline
\textbf{Step 1': }$a_{6}$ appeared three times which is one of the most, so we can take $S_{1}=\{0,a_{6}\}$, and obtain $\mathcal{A}_{1}$ by adding the shaded entries to $\mathcal{A}_{0}$. $M_{1}$ is the table formed by deleting row $a_{4},a_{6},a_{7}$ and column $0,a_{2},a_{3}$, and the side length of $M_{1}$ is $n_{1}=1$.
&
\begin{tabular}{|c |c|c c c c|}
\hline
\multicolumn{2}{|c|}{\multirow{2}{*}{\backslashbox{$r\searrow$}{$c\searrow$}}}  & \multicolumn{4}{c|}{$P_{2}$}\\\cline{3-6}
 \multicolumn{2}{|c|}{}& $0$ & $a_{1}$ & $a_{2}$ & $a_{3}$ \\\hline
\multirow{4}{*}{\tabincell{c}{$P_{0}$}} & $a_{4}$ & $a_{4}$ & $a_{2}$ & $a_{1}$ & \cg$a_{6}$\\  
 & $a_{5}$ & $a_{5}$ & $a_{7}$ & $a_{3}$ & $a_{2}$ \\
 & $a_{6}$ & \cg$a_{6}$ & $a_{5}$ & $a_{7}$ & $a_{4}$\\
 & $a_{7}$ & $a_{7}$ & $a_{3}$ & \cg$a_{6}$ & $a_{1}$\\\hline
\end{tabular}\\\hline

\textbf{Stop ($k=1$): }Now we stop and return $S=\{0,a_{6},a_{7}\}$, $\mathcal{A}=\mathcal{A}_{1}\cup\{\text{the shaded entries}\}$. 
&
\begin{tabular}{|c |c|c|}
\hline
\multicolumn{2}{|c|}{\multirow{2}{*}{\backslashbox{$r\searrow$}{$c\searrow$}}}  & \multicolumn{1}{c|}{$P_{2}$}\\\cline{3-3}
 \multicolumn{2}{|c|}{}& $a_{1}$ \\\hline
\multirow{1}{*}{$P_{0}$} 
 & $a_{5}$ & \cg$a_{7}$  \\\hline
\end{tabular}\\\hline
\textbf{Output: }The shaded entries present the output $\mathcal{A}$. This gives an upper bound $\pres(f)\le \# S=3$. In this case, $1+1+n_{1}=3$ so the worst case scenario in (\ref{eq:pres_bound_nk}) is met. 
&
\begin{tabular}{|c|c|c c c c|}
\hline
\multicolumn{2}{|c|}{\multirow{2}{*}{\backslashbox{$r\searrow$}{$c\searrow$}}}  & \multicolumn{4}{c|}{$P_{2}$}\\\cline{3-6}
 \multicolumn{2}{|c|}{}& $0$ & $a_{1}$ & $a_{2}$ & $a_{3}$ \\\hline
\multirow{4}{*}{$P_{2}$} & $0$ & \cg$0$ & $*$ & $*$ & $*$\\ 
 & $a_{1}$ & $*$ & \cg$0$ & $*$ & $*$ \\
 & $a_{2}$ & $*$ & $*$ & \cg$0$ & $*$\\
 & $a_{3}$ & $*$ & $*$ & $*$ & \cg$0$\\\cline{1-6}
\multirow{4}{*}{\tabincell{c}{$P_{0}$}} & $a_{4}$ & $a_{4}$ & $a_{2}$ & $a_{1}$ & \cg$a_{6}$\\  
 & $a_{5}$ & $a_{5}$ & \cg$a_{7}$ & $a_{3}$ & $a_{2}$ \\
 & $a_{6}$ & \cg$a_{6}$ & $a_{5}$ & $a_{7}$ & $a_{4}$\\
 & $a_{7}$ & $a_{7}$ & $a_{3}$ & \cg$a_{6}$ & $a_{1}$\\\hline
\end{tabular}\\\hline

\end{tabular}
\end{center}
\end{table}

\section{The special case when $f$ is two-to-one}

We now focus on the special case when $f$ is a two-to-one function.
Our motivation for doing so is directly related to the problem of finding
permutations with low DU in that a substantial class of optimal DU functions,
that is planar functions, are two-to-one.

First, consider the case when $\# \grp{G}=q$ is even and $V(f)=q/2$. The uniformity is $u(f)=2$ and $\im{f}=P_2$. So the initial square table $M_0$ is simply the lower half of $M_f$, i.e., the block whose rows are in $P_0$ and columns are in $P_2$. Since there is no need to repeat any blocks, we omit the second indices of the columns. Using Algorithm \ref{alg:average}, we prove the following upper bound for $\pres(f)$. 
\begin{theorem2}
If $\# \grp{G}=q$ is even and $f: \grp{G}\to \grp{G}$ is two-to-one, then
\[\pres(f)\le \left\lceil 2\sqrt{q} \right\rceil-1. \]
When $q$ is a perfect square, the bound can be improved to 
\[\pres(f)\le  2\sqrt{q}-2.\]
\end{theorem2}
\begin{proof}
We use the notation stated in Algorithm \ref{alg:average}.
Observe that if the steps of Algorithm \ref{alg:average} are iterated $k$ times for some integer $k\ge 1$, then the output value set is $S=\{0\}\cup\{v_1,\dots,v_k\}\cup\{m_{c,c}\in M_k\}$. This implies that for any integer $k\ge 1$, 
\begin{equation}\label{eq:pres_bound_nk}
\pres(f)\le\# S\le 1+k+n_k. 
\end{equation}
In particular, 
\begin{equation}\label{eq:pres_bound_nk_min}
\pres(f)\le \min_{k\ge 1,k\in\itr} \{1+k+n_{k}\}. 
\end{equation}
For the remaining, let $\# \grp{G}=q$ be even and $f: \grp{G}\to \grp{G}$ be two-to-one. Our strategy is to prove a simple upper bound $t_k$ for $n_k/q$ that depends on $k$, and find the minimum value of $1+k+qt_k$ for $k\ge 1$. 


First, since $\#P_0=q-V(f)=q/2$, we have $n_0=q/2$. For $k>0$, the table $M_{k-1}$ has $n_{k-1}^2$ entries, so the average number of appearances in $M_{k-1}$ over all elements of $ \grp{G}$ is $\mu_{k-1}=\lceil n_{k-1}^2/q\rceil$. The table $M_k$ is of dimension $n_k\times n_k$, where $n_k=n_{k-1}-\mu_{k-1}$ since $M_k$ is obtained from deleting exactly $\mu_{k-1}$ rows and $\mu_{k-1}$ columns from $M_{k-1}$. Therefore, we obtain the following recursive definition of $n_k$: 
\begin{equation}\label{eq:recurrence_n}
\begin{aligned}
n_{0}&=\frac{q}{2},\\
n_{k}&=n_{k-1}-\left\lceil\frac{n_{k-1}^2}{q}\right\rceil\text{ for }k\ge 1. 
\end{aligned}
\end{equation}
The sequence $\{n_{k}\}_{k\ge 0}$ is strictly decreasing when $n_k>0$ since we must obtain a smaller table $M_k$ after each step, until reaching $0$. 
Let $s_k=n_k/q$ for $k\ge 0$. Dividing both sides of (\ref{eq:recurrence_n}) by $q$ gives the following recursive definition of $s_k$: 
\begin{equation}\label{eq:recurrence_s}
\begin{aligned}
s_{0}&=\frac{1}{2},\\
s_{k}&=s_{k-1}-\frac{\left\lceil q s_{k-1}^2\right\rceil}{q}\text{ for }k\ge 1. 
\end{aligned}
\end{equation}
The sequence $\{s_{k}\}_{k\ge 0}$ is also strictly decreasing as long as $s_k>0$ since it is just a rescale of $\{n_{k}\}_{k\ge 0}$. Define another sequence $\{t_{k}\}_{k\ge 0}$ by 
\begin{equation*}
\begin{aligned}
t_{k}&=\frac{1}{k+2},\text{ for all }k\ge 0.
\end{aligned}
\end{equation*}
Clearly, the sequence $\{t_{k}\}_{k\ge 0}$ is also strictly decreasing. When $k=0$, $s_{0}=t_{0}=\frac{1}{2}$. For $k\ge 1$, we prove $s_{k}< t_{k}$ by induction on $k$. When $k=1$, we have 
\[s_{1}=\frac{1}{2}-\frac{\left\lceil q/4\right\rceil}{q}\le \frac{1}{2}-\frac{1}{4}=\frac{1}{4}<t_{1}=\frac{1}{3}. \] 
For $k>1$, observe that 
\begin{equation*}
\begin{aligned}
t_{k}-t_{k-1}+t_{k-1}^2&=\frac{1}{k+2}-\frac{1}{k+1}+\frac{1}{(k+1)^2}\\
&=\frac{k^2+2k+1-(k^2+3k+2)+(k+2)}{(k+2)(k+1)^2}\\
&=\frac{1}{(k+2)(k+1)^2}.
\end{aligned}
\end{equation*}
Therefore, 
\begin{equation*}
\begin{aligned}
t_{k}=t_{k-1}-t_{k-1}^2+\frac{1}{(k+2)(k+1)^2}.
\end{aligned}
\end{equation*}
Now assume that the assertion is true for $k-1$. Then
\begin{equation*}
\begin{aligned}
t_{k}-s_{k}&=\left(t_{k-1}-t_{k-1}^2+\frac{1}{(k+2)(k+1)^2}\right)-\left(s_{k-1}-\frac{\left\lceil q s_{k-1}^2\right\rceil}{q}\right)\\
&=t_{k-1}-s_{k-1}-t_{k-1}^2+\frac{\left\lceil q s_{k-1}^2\right\rceil}{q}+\frac{1}{(k+2)(k+1)^2}\\
&\ge t_{k-1}-s_{k-1}-t_{k-1}^2+s_{k-1}^2+\frac{1}{(k+2)(k+1)^2}\\
&=\Big(t_{k-1}-s_{k-1}\Big)\bigg(1-\Big(t_{k-1}+s_{k-1}\Big)\bigg)+\frac{1}{(k+2)(k+1)^2}.
\end{aligned}
\end{equation*}
We have $\frac{1}{(k+2)(k+1)^2}>0$ for $k\ge 1$, and $t_{k-1}-s_{k-1}>0$ by induction hypothesis. Moreover, since both $\{s_{k}\}_{k\ge 0}$ and $\{t_{k}\}_{k\ge 0}$ are decreasing, we have $t_{k-1}\le t_{0}=\frac{1}{2}$ and $s_{k-1}\le s_{0}=\frac{1}{2}$. Hence, $1-(t_{k-1}+s_{k-1})\ge 0$ and therefore $t_{k}-s_{k}>0$ is proved. 
We now apply $n_k=qs_{k}<qt_{k}$ to (\ref{eq:pres_bound_nk_min}) to obtain
\begin{equation}\label{eq:pres_bound_tk}
\begin{aligned}
\pres(f)&\le \min_{k\ge 1,k\in\itr}\{1+k+n_{k}\}< \min_{k\ge 1,k\in\itr}\bigg\{1+k+\frac{q}{k+2}\bigg\}.
\end{aligned}
\end{equation}
By extending the function $h(k)=1+k+\frac{q}{k+2}$ to all real numbers $k\ge 1$ and computing the derivative $\frac{dh}{dk}$, one can show that $h$ achieves its minimum over $k\ge 1$ when $q=(k+2)^2$. If $q$ is a perfect square, then substituting $k=\sqrt{q}-2\in\itr$ into (\ref{eq:pres_bound_tk}) gives the bound
\begin{equation}\label{eq:pres_bound_perfectsq}
\begin{aligned}
\pres(f)&< 1+k+(k+2)=2\sqrt{q}-1. 
\end{aligned}
\end{equation}
If $q$ is not a perfect square, then $k=\sqrt{q}-2\notin\itr$. Let $k^{+}=\lceil k\rceil=\lceil \sqrt{q}\rceil-2$ and $k^{-}=\lfloor k \rfloor=\lfloor \sqrt{q}\rfloor-2$. The bound above still holds as long as the integer part of $h(k)$ and $\min (h(k^{+}),h(k^{-}))$ are the same. In fact, 
\begin{equation*}
\begin{aligned}
|h(k^{-})-h(k)|&=\bigg|\lfloor \sqrt{q} \rfloor+\frac{q}{\lfloor \sqrt{q} \rfloor}-\sqrt{q}-\frac{q}{\sqrt{q}}\bigg|\\
&=\Bigg|q\frac{\sqrt{q}-\lfloor \sqrt{q} \rfloor}{\sqrt{q}\lfloor \sqrt{q} \rfloor}-(\sqrt{q}-\lfloor \sqrt{q} \rfloor)\Bigg|\\
&=\Bigg|(\sqrt{q}-\lfloor \sqrt{q} \rfloor)\bigg(\frac{q}{\sqrt{q}\lfloor \sqrt{q} \rfloor}-1\bigg)\Bigg|<1,
\end{aligned}
\end{equation*}
and similarly one can show that $|h(k^{+})-h(k)|<1$, so (\ref{eq:pres_bound_perfectsq}) holds for most cases. In a rare occasion when the integer part of $h(k)$ is $1$ less than $\min \{h(k^{+}),h(k^{-})\}$, we have the slightly weaker expression 
\begin{equation*}
\begin{aligned}
\pres(f)&\le \lceil h(k)\rceil=\lceil 2\sqrt{q} \rceil-1. 
\end{aligned}
\end{equation*}
\end{proof}
A slight modification of the previous proof yields the following.
\begin{theorem2}
If $\# \grp{G}=q$ is odd and $f:\grp{G}\to \grp{G}$ such that $f(0)=0$ and $f$ is two-to-one on the nonzero elements. Then 
\[\pres(f)\le \left\lceil 2\sqrt{q-1}\right\rceil-1.\]
When $q-1$ is a perfect square, the bound can be improved to 
\[\pres(f)\le  2\sqrt{q-1}-2.\]
\end{theorem2}
\begin{proof}
Let $\# \grp{G}=q$ be odd,   $f(0)=0$ and $f$ be two-to-one on the nonzero elements of $\grp{G}$. In this case, we have $V(f)=(q+1)/2$ so $n_0=q-V(f)=(q-1)/2$. Since $0$ does not appear in $M_0$, we may let $\mu_k$ be the average number (round up to the nearest integer) of appearance of $M_k$ over $\grp{G}\setminus\{0\}$ instead. Then $\mu_{k-1}=\lceil n_{k-1}^2/(q-1)\rceil$, and $n_k=n_{k-1}-\lceil n_{k-1}^2/(q-1)\rceil$ for $k\ge 1$. Similar to the proof of Theorem \ref{th:2to1bound_even}, we consider $s_k=n_{k}/(q-1)$. The sequence $\{s_k\}_{k\ge 0}$ satisfies the same recursive definition in (\ref{eq:recurrence_s}), except that $q$ is replaced by $q-1$. It can be shown that $s_k< t_k=\frac{1}{k+2}$ still holds for $k\ge 1$ by almost the same computation as in the proof of Theorem \ref{th:2to1bound_even}. Therefore, we have
\begin{equation*}
\begin{aligned}
\pres(f)&\le \min_{k\ge 1,k\in\itr}\{1+k+n_{k}\}<\min_{k\ge 1,k\in\itr}\bigg\{1+k+\frac{q-1}{k+2}\bigg\}. 
\end{aligned}
\end{equation*}
Since $h(k)=1+k+\frac{q-1}{k+2}$ achieves its minimum over all real numbers $k\ge 1$ when $q-1=(k+2)^2$, if $q-1$ is a perfect square, we have
\begin{equation*}
\begin{aligned}
\pres(f)&< 1+k+(k+2)=2\sqrt{q-1}-1. 
\end{aligned}
\end{equation*}
Otherwise if $q-1$ is not a perfect square, then by similar arguments as in the proof of Theorem \ref{th:2to1bound_even}, we obtain
\begin{equation*}
\begin{aligned}
\pres(f)&\le \left\lceil h(\sqrt{q-1}-2)\right\rceil=\left\lceil 2\sqrt{q-1}\right\rceil-1.
\end{aligned}
\end{equation*}
This completes the proof.
\end{proof}
An immediate application of this theorem is the following. 
\begin{cor}
If $q$ is odd, and $f=x^2\in\ffx{q}$, then $\pres(f)\le \left\lceil 2\sqrt{q-1}\right\rceil-1$. 
\end{cor}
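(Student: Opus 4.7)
The plan is to verify that $f(x)=x^2$ over $\ff{q}$ with $q$ odd satisfies the hypotheses of Theorem \ref{th:2to1bound_odd}, after which the bound follows immediately from that theorem.

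First, observe that $f(0)=0^2=0$, so the first hypothesis is met. For the second, I would show that $f$ is two-to-one on $\ff{q}\setminus\{0\}$. Given any nonzero $x\in\ff{q}$, the equation $y^2=x^2$ factors as $(y-x)(y+x)=0$, whose roots are $y=\pm x$. Since $q$ is odd, the characteristic is not $2$, so $x\neq -x$ whenever $x\neq 0$, giving exactly two distinct preimages $\{x,-x\}$ of each nonzero value in $\im{f}$. Hence $f$ is two-to-one on the nonzero elements of $\ff{q}$.

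With both hypotheses of Theorem \ref{th:2to1bound_odd} verified, applying it directly yields $\pres(f)\le \lceil 2\sqrt{q-1}\rceil-1$, which is the desired bound. The only conceivable obstacle is ensuring the two-to-one property is stated precisely as needed, but this is immediate from the characteristic being odd, so there is no genuine difficulty.
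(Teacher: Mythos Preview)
Your proposal is correct and matches the paper's approach exactly: the corollary is presented there as ``an immediate application'' of Theorem~\ref{th:2to1bound_odd}, and you have simply spelled out the routine verification that $x^2$ over $\ff{q}$ with $q$ odd satisfies its hypotheses.
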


\section{The Existence of Admissible Subtables for Given Value Sets}\label{sc:alg_ex}

We have seen that an upper bound $\pres(f)\le k$ can be proved by finding an admissible subtable of $M_{f}$ whose value set is a $k$-subset of $ \grp{G}$ and the range is $ \grp{G}$. In this section, we discuss the existence of such admissible subtables when the value set is given. 

Let $f :\grp{G}\to \grp{G}$ and $S$ be a $k$-subset of $\grp{G}$. Clearly, any admissible subtable of $M_{f}$ with value set contained in $S$ must be a subtable of $M_{S}:=(m_{r,c}\in S\,:\, m_{r,c}\in M_f)$, the subtable of all entries with values in $S$. Define the range of $M_{S}$ similarly as
\begin{equation}\label{eq:def_rangeMS}
\range(M_{S}):=\{r\in \grp{G}\,:\, m_{r,c}\in S\},
\end{equation}
i.e., the indices of the rows of $M_f$ that has an entry in $S$. 
Hence, if we want to show $\pres(f)\le k$ by finding an admissible subtable with value set $S$, we should only proceed when $\range(M_{S})=\grp{G}$. Indeed, if $\range(M_{S})\neq \grp{G}$ for every $k$-subset $S$ of $\grp{G}$, then there is no admissible subtable with value set a $k$-subset and range $\grp{G}$. So by Corollary \ref{co:ub_ppres_subtb_correspondence}, $\pres(f)>k$. 

If we view each row of $M_f$ as a set of entries, then $M_f$ can be seen as a hypergraph $H_{f}$ with vertex set $\grp{G}$, and edge set the set of the rows of $M_f$, which are all of cardinality $V(f)$. For a hypergraph $H$, a subset of the vertex set of $H$ is called a \textit{vertex cover} (also known as a \textit{transversal}) if it intersects every edge of $H$. From this point of view, $S$ is a vertex cover of $H_{f}$ if and only if every row of $M_f$ has at least one entry in $S$, i.e., $\range(M_{S})=\grp{G}$. Therefore, it is possible that tools from the theory of hypergraphs may be used to study $\pres(f)$. 

We call a subset $S\subseteq \grp{G}$ a \textit{cover of $\grp{G}$ associated with $f$} if $\range(M_{S})=\grp{G}$, or simply a \textit{cover} when $\grp{G}$ and $f$ are clear from the context. In the next theorem, we give a sufficient condition for the existence of a cover with cardinality $k$. \\
\begin{theorem}\label{th:expcover}
Let $\# \grp{G}=q$, $f:\grp{G}\to \grp{G}$ and $V(f)=v$. Let $k$ be an integer such that $1\le k\le q$. If 
\begin{equation}\label{eq:expcover}
\begin{aligned}
q\sum_{i=0}^{k}(-1)^i\binom{k}{i}\frac{\binom{v}{i}}{\binom{q}{i}}<1,
\end{aligned}
\end{equation}then there exists a cover of $\grp{G}$ associate with $f$ with cardinality $k$. 
\end{theorem}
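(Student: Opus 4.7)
The plan is to use the probabilistic method. I would pick $S$ uniformly at random from the $\binom{q}{k}$ many $k$-subsets of $\grp{G}$, and show that under hypothesis (\ref{eq:expcover}) the expected number of rows of $M_f$ that have no entry in $S$ is strictly less than $1$. Since this expected number is a non-negative integer-valued random variable on average, some outcome must achieve $0$ uncovered rows, i.e., produce a cover.

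The key observation is that each row of $M_f$ is, as a set of values, a $v$-subset of $\grp{G}$: the row indexed by $r$ is $R_r = \{r - c : c \in \im{f}\}$, and since $c \mapsto r - c$ is a bijection on $\grp{G}$, $|R_r| = V(f) = v$. There are $q$ such rows, one for each $r \in \grp{G}$, and $S$ fails to cover if and only if $S \cap R_r = \emptyset$ for some $r$.

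Next I would compute $\Pr(S \cap R_r = \emptyset)$ for a fixed row via inclusion--exclusion over the $k$ ``positions'' of $S$. Writing $S$ as an ordered $k$-tuple of distinct elements $s_1,\dots,s_k$ (which does not change the distribution of $S$ as a set), for any $T \subseteq [k]$ with $|T| = i$, by symmetry
\[
\Pr(s_j \in R_r \text{ for all } j \in T) \;=\; \prod_{j=0}^{i-1} \frac{v-j}{q-j} \;=\; \frac{\binom{v}{i}}{\binom{q}{i}}.
\]
Inclusion--exclusion then gives
\[
\Pr(S \cap R_r = \emptyset) \;=\; \sum_{i=0}^{k} (-1)^i \binom{k}{i} \frac{\binom{v}{i}}{\binom{q}{i}}.
\]
This quantity is the same for every row because it depends only on $|R_r| = v$.

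Finally, by linearity of expectation, the expected number of rows uncovered by $S$ equals
\[
q \sum_{i=0}^{k} (-1)^i \binom{k}{i} \frac{\binom{v}{i}}{\binom{q}{i}},
\]
which is strictly less than $1$ by (\ref{eq:expcover}). Hence some $k$-subset $S$ leaves no row uncovered, and by the discussion around (\ref{eq:def_rangeMS}) such an $S$ is a cover of $\grp{G}$ associated with $f$. The main thing to get right is the inclusion--exclusion identity and the justification that each row of $M_f$ really is a $v$-set (so the per-row probability is uniform across rows); everything else is a union bound plus the standard probabilistic-method conclusion, so no serious obstacle is expected.
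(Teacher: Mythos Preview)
Your proposal is correct and follows essentially the same probabilistic-method argument as the paper: both draw $S$ uniformly from $\binom{\grp{G}}{k}$ and use inclusion--exclusion over the $k$ elements of $S$ to arrive at the same expression. The only cosmetic difference is that the paper computes the expected number of \emph{covered} rows and shows it exceeds $q-1$, whereas you compute the complementary expected number of \emph{uncovered} rows and show it is less than $1$; your per-row framing (noting each row of $M_f$ is a $v$-set) is arguably a bit more direct, but the content is the same.
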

\begin{proof}
Let $\# \grp{G}=q$, $f:\grp{G}\to \grp{G}$ and $V(f)=v$. For $\alpha\in \grp{G}$, define $R_{\alpha}=\range(M_{\{\alpha\}})$, where range is defined as in (\ref{eq:def_rangeMS}). This is the set of indices of the rows of $M_f$ that has an entry $\alpha$. 

For $1\le k\le q$, let $\binom{\grp{G}}{k}$ denote the set of all $k$-subsets of $\grp{G}$. If $S\in\binom{\grp{G}}{k}$, then $\#\range(M_S)=\#\cup_{\alpha\in S}R_{\alpha}$, and we can rewrite it by the principle of inclusion and exclusion as 
\begin{equation}\label{eq:rows_union}
\begin{aligned}
\#\bigcup_{\alpha\in S}R_{\alpha}=\sum_{i=1}^{k}(-1)^{i-1}\sum_{T\subseteq S,\#T=i}\#\bigcap_{\alpha\in T}R_{\alpha}.
\end{aligned}
\end{equation}
Moreover, the number of rows of $M_{f}$ that contain $S$ can be written as 
\begin{equation}\label{eq:rows_intersection}
\#\bigcap_{\alpha\in S}R_{\alpha}=\sum_{r\in \grp{G}}\mathbbm{1}_{(S\subseteq \text{row }r)},
\end{equation}
where $\mathbbm{1}_{(S\subseteq \text{row }r)}$ denotes the indicator function that takes value $1$ if $S$ is a subset of row $r$ of $M_f$, and $0$ otherwise. 

For $1\le k\le q$, let $X_{k}=\#\bigcup_{\alpha\in S}R_{\alpha}$ and $Z_k=\#\bigcap_{\alpha\in S}R_{\alpha}$ be two random variables where $S$ is uniformly chosen from $\binom{\grp{G}}{k}$. By (\ref{eq:rows_intersection}), the expected value of $Z_{k}$ is
\begin{equation*}
\begin{aligned}
E(Z_k)&=\sum_{S\in\binom{\grp{G}}{k}}\Bigg(\sum_{r\in \grp{G}}\mathbbm{1}_{(S\subseteq \text{row }r)}\Bigg)\frac{1}{\binom{q}{k}}\\
&=\frac{1}{\binom{q}{k}}\sum_{r\in \grp{G}}\Bigg(\sum_{S\in\binom{\grp{G}}{k}}\mathbbm{1}_{(S\subseteq \text{row }r)}\Bigg)\\
&=\frac{1}{\binom{q}{k}}\sum_{r\in \grp{G}}\binom{v}{k}=\frac{\binom{v}{k}q}{\binom{q}{k}}, 
\end{aligned}
\end{equation*}
where the second last equality is because every row of $M_f$ has exactly $v$ entries. Hence, by (\ref{eq:rows_union}) the expected value of $X_k$ is 
\begin{equation}\label{eq:EXk}
\begin{aligned}
E(X_k)&=\sum_{i=1}^{k}(-1)^{i-1}\sum_{T\subseteq S,\#T=i}E(Z_i)\\
&=\sum_{i=1}^{k}(-1)^{i-1}\binom{k}{i}E(Z_i)\\
&=q\sum_{i=1}^{k}(-1)^{i-1}\binom{k}{i}\frac{\binom{v}{i}}{\binom{q}{i}}.
\end{aligned}
\end{equation}
Since $X_k\le q$ is an integer, if $E(X_k)>q-1$, then there exists $S\in \binom{\grp{G}}{k}$ such that $\#\bigcup_{\alpha\in S}R_{\alpha}=q$, which means $\range(M_S)=\grp{G}$ and $S$ is a cover. Since the summand in the last line of (\ref{eq:EXk}) equals $-1$ if $i=0$, we can simplify the condition $E(X_k)>q-1$ into (\ref{eq:expcover}) as 
\begin{equation*}
\begin{aligned}
1&>q-q\sum_{i=1}^{k}(-1)^{i-1}\binom{k}{i}\frac{\binom{v}{i}}{\binom{q}{i}}\\
&=q\sum_{i=0}^{k}(-1)^{i}\binom{k}{i}\frac{\binom{v}{i}}{\binom{q}{i}}. 
\end{aligned}
\end{equation*}
\end{proof}

\begin{remark}
When $V(f)=(q+1)/2$, for $1\le i\le k\le q$, 
\begin{equation*}
\begin{aligned}
E(Z_i)&=\frac{\binom{\frac{q+1}{2}}{i}q}{\binom{q}{i}}\\
&=\frac{q}{2^i}\frac{(q+1)(q-1)\cdots(q-2i+3)}{q(q-1)\cdots(q-i+1)}\\
&=\frac{q}{2^i}\prod_{j=0}^{i-1}\frac{q-(2j-1)}{q-j}.
\end{aligned}
\end{equation*}
When $q$ is sufficiently large relative to $k$, each factor $(q-(2j-1))/(q-j)$ in the above product is close to $1$. In this case, we may approximate each $E(Z_i)$ by $\frac{q}{2^i}$, and the inequality (\ref{eq:expcover}) becomes
\begin{equation*}
\begin{aligned}
1&>\sum_{i=0}^{k}(-1)^i\binom{k}{i}\frac{q}{2^i}=q\bigg(1-\frac{1}{2}\bigg)^k=\frac{q}{2^k},
\end{aligned}
\end{equation*}
which implies $k> \log_{2}q$. This suggests that when $q$ is large and $V(f)=(q+1)/2$, such as when $f(x)=x^2\in\ffx{q}$ for odd $q$, the search for a cover of $\ff{q}$ can start with small $k$-subsets where $k$ is about $\log_{2}q$. 
\end{remark}

When a cover $S$ is found, the following algorithm searches for an admissible subtable $\mathcal{A}$ of $M_f$ such that $\range(\mathcal{A})=\grp{G}$ and its value set is contained in $S$. 

\begin{algorithm}[H]\caption{Finding an admissible subtable whose value set is contained in a given cover $S$. }
\begin{itemize}
\item[\textbf{Input:}] $M_{f}=(m_{r,c}\,:\, r\in \grp{G},c\in\im{f})$, $S=\{v_{1},\dots,v_{k}\}$, some large integer $N$. 
\item[\textbf{Initialize:}] For every $c\in\im{f}$, let $A_{c}=\{\}$ and $R_{c}=\{r\in \grp{G}\,:\, m_{r,c}\in S\}$. For every $r\in \grp{G}$, let $a_{r}=\mathrm{FALSE}$ and $Q_{r}=\{c\in\im{f}\,:\, m_{r,c}\in S\}$. 
\item[\textbf{Step 1.}] For every $r\in \grp{G}$, find one $c\in Q_{r}$ such that $\#A_{c}\le \#\preim(f,c)$. Append $r$ to $A_{c}$ and update $a_{r}=\mathrm{TRUE}$. If no such $c$ exists, then no action at this step for $r$. 
\item[\textbf{Step 2.}] Let $B=\{r\in \grp{G}\,:\, a_{r}=\mathrm{FALSE}\}$. If $B\neq\emptyset$ then for every $r\in B$, append $r$ to some $A_{c}$ where $c\in Q_{r}$. Update $a_{r}=\mathrm{TRUE}$ and remove $r$ from $B$. If $\#A_{c}> \#\preim(f,c)$, then move some $r'\neq r$ from $A_{c}$ to $B$ and update $a_{r'}=\mathrm{FALSE}$. Repeat this step until $B=\emptyset$ or the number of iterations reaches $N$. 
\item[\textbf{Return:}] $B$, $\mathcal{A}=(m_{r,c}\,:\, c\in \im{f},r\in A_c)$. 
\end{itemize}
\end{algorithm}

The algorithm is based on the fact that the desired $\mathcal{A}$ must be a subtable of $M_S=(m_{r,c}\in S\,:\, m_{r,c}\in M_f)$. For each column $c$, let $R_c$ be the set of indices of rows $r$ such that $m_{r,c}\in S$. Similarly, for each row $r$, let $Q_r$ collects the indices of columns $c$ such that $m_{r,c}\in S$. So the set of all $R_c$ and $Q_r$ carry the information of $M_S$ where the search of admissible subtables should be limited in. The process starts by going over every row $r\in \grp{G}$ and appending one entry from $Q_r$ to $\mathcal{A}$. More precisely, for $c\in\im{f}$, we let $A_{c}$ be the set of indices of rows $r$ such that $m_{r,c}$ is chosen in $\mathcal{A}$. For every row $r$, find exactly one $c\in Q_r$ such that $\# A_{c}\le\#\preim(f,c)$. This means that $m_{r,c}\in M_S$ and column $c$ still has ``openings''. Assign $r$ to a $A_{c}$ and update $a_{r}=\mathrm{TRUE}$ to mark that $r$ has been assigned to some $A_{c}$. If no such $c$ exists, then we leave $r$ unassigned. 

After going over every row $r$ once in the first step, let $B$ be the set of indices of unassigned rows. If $B\neq \emptyset$, then we work through every row $r\in B$ and assign $r$ to some $A_{c}$ such that $c\in Q_{r}$, and remove $r$ from $B$. If this makes $\# A_{c}>\#\preim(f,c)$, then we move another element $r'$ from $A_{c}$ to $B$. Repeat this step until there are no more unassigned rows ($B=\emptyset$), or if the algorithm fails to assign every row within some large number $N$ of iterations. If the process succeeds with $B=\emptyset$, then $\bigcup_{c\in\im{f}}A_{c}=\grp{G}$, $\#A_{c}=\#\preim(f,c)$ for each $c\in\im{f}$, and the output $\mathcal{A}=(m_{r,c}\,:\, c\in\im{f},r\in A_{c})$ is an admissible subtable with $\range(\mathcal{A})=\grp{G}$ and value set contained in $S$. This also verifies that $\pres(f)\le \# S$. On the other hand, if $B\neq \emptyset$ when the algorithm terminates, then no conclusion can be drawn for $\pres(f)$ and one needs to restart with a different choice of $S$. 

\section{A Generalization of the Linear Integer Program for Optimizing DU}\label{sc:gen_IP}

In this final section, we generalize the IP in Section \ref{sc:alg_IP} to find a permutation of optimal DU in a given finite group.
As should be clear by now, one of the central motivations for studying
permutation resemblance is to better understand how to construct low DU bijections, so we feel the generalization to be a natural next question.
The generalized IP is the following: 


\begin{align}
&\text{minimize:} \qquad DU \notag\\
&\text{subject to:} \notag\\
& \sum_{c\in\grp{G}}x_{r,c}=1,  \text{ for all }r\in \grp{G},\label{eq:gen_IP_row}\\
& \sum_{r\in \grp{G}}x_{r,c}=1,  \text{ for all }c\in\grp{G},\label{eq:gen_IP_col}\\
&  \sum_{c\in \grp{G}}\sum_{r\in \grp{G}}z_{a,b,r,c}=\delta_{a,b},  \text{ for all }(a,b)\in\grp{G}^2,\,a\neq 0\label{eq:gen_IP_sum_delta}\\
&  2z_{a,b,r,c}\le x_{r+b,c+a}+x_{r,c}\le z_{a,b,r,c}+1, \text{ for all }(a,b,r,c)\in\grp{G}^4,\,a\neq 0\label{eq:gen_IP_z}\\
&  0\le\delta_{a,b}\le DU , \text{ for all }(a,b)\in\grp{G}^2,\,a\neq 0\label{eq:gen_IP_delta_DU}\\
&  x_{r,c},z_{a,b,r,c}\in\{0,1\}, \text{ for all }(a,b,r,c)\in\grp{G}^4,\,a\neq 0 \notag \\
&  DU,\delta_{a,b}\in\itr,\, \text{ for all }(a,b)\in \grp{G}^2,\,a\neq 0 \notag.
\end{align}

Let $f:\grp{G}\to\grp{G}$. We first need to redefine the subtraction table of $f$. For $c\in\im{f}$, recall that in the previous setting, the table $M_f$ only has the information of $\#\preim(f,c)$,  but does not keep track of the elements of $\preim(f,c)$. Also, each column $c\in\im{f}$ of an admissible subtable only records $g(\preim(f,c))$ and $(g+f)(\preim(f,c))$ as whole sets, but does not specify which element in $\preim(f,c)$ is to be mapped to which element. In order to compute DU directly in the IP, we need to keep more information on the preimages. We do this by ``decompressing'' each column. More precisely, for $1\le t\le u(f)$, the new table will now have $t$ copies of every column $c\in\im{f}$ in $P_t$, and each copy will be indexed by exactly one element of $\preim(f,c)$ instead. Then the columns of the decompressed subtraction table $M'_f$ are indexed by the whole group $\grp{G}$, and the indices of the columns are now to be viewed as the elements of the domain, instead of $\im{f}$. The entry of $M'_f$, using the new indices, is defined by $m_{r,c}:=r-f(c)$ for $r,c\in\grp{G}$. With this new setting, an admissible subtable $\mathcal{A}'$ should have exactly one entry in each column. Table \ref{tb:sub_x2_adm_decomp} is an example of $M'_f$ and $\mathcal{A}'$ obtained by decompressing Table \ref{tb:sub_x2_adm}. We keep the values $f(c)$ for reference, but the actual column indices are now the $c$'s. This new admissible subtable $\mathcal{A}'$ represents exactly one $g$ and $g+f$: for every grey entry $m_{r,c}\in\mathcal{A}'$, the corresponding $g$ sends $c$ to the entry value of $m_{r,c}$, and $g+f$ sends $c$ to $r$. For example, $g(0)=0$ and $(g+f)(0)=0$, $g(1)=\alpha$ and $(g+f)(1)=\alpha^2$, $g(-1)=1$ and $(g+f)(-1)=-1$, etc. The definition of $\range(\mathcal{A}')$ still makes sense and remains the same as before. To construct a permutation $g+f$, we still need to choose every row exactly once for an admissible subtable. 

\begin{table}[h]
\caption{\label{tb:sub_x2_adm_decomp} The decompressed subtraction table $M'_{f}$ of $f=x^2$ over $\ff{9}$ and an admissible subtable $\mathcal{A}'$. The first three columns and the first two rows label the coordinates of $M_{f}$. }
\begin{center}
\begin{tabular}{|c|c|*{9}{c}|}
\hline

 \multicolumn{2}{|c|}{$f(c)\rightarrow$}& \multicolumn{1}{|c|}{$0$} & \multicolumn{2}{|c|}{$1$} & \multicolumn{2}{|c|}{$\alpha^2$} & \multicolumn{2}{|c|}{$-1$} & \multicolumn{2}{|c|}{$-\alpha^2$} \\\hline
 \multicolumn{2}{|c|}{\multirow{2}{*}{\backslashbox{$r\searrow$}{$c\searrow$}}} & \multicolumn{1}{|c|}{$P_{1}$} &\multicolumn{8}{c|}{$P_{2}$}\\\cline{3-11}
\multicolumn{2}{|c|}{} & \multicolumn{1}{|c|}{$0$} & \multicolumn{1}{|c|}{$1$} & \multicolumn{1}{|c|}{$-1$} & \multicolumn{1}{|c|}{$\alpha$} & \multicolumn{1}{|c|}{$-\alpha$} & \multicolumn{1}{|c|}{$\alpha^2$} & \multicolumn{1}{|c|}{$-\alpha^2$} & \multicolumn{1}{|c|}{$\alpha^3$} & \multicolumn{1}{|c|}{$-\alpha^3$}\\\hline
$P_{1}$& $0$ & \cg$0$ & $-1$ & $-1$ & $-\alpha^2$ & $-\alpha^2$ & $1$ & $1$ & $\alpha^2$ & $\alpha^2$ \\\cline{1-2}
\multirow{4}{*}{$P_{2}$} & $1$ & $1$ & $0$ & $0$ & $-\alpha$ & $-\alpha$ & $-1$ & $-1$ & $-\alpha^3$ & $-\alpha^3$\\ 
 & $\alpha^2$ & $\alpha^2$ & \cg$\alpha$  & $\alpha$ & \cg$0$ & $0$ & $-\alpha^3$ & $-\alpha^3$ & $-\alpha^2$ & $-\alpha^2$ \\
 & $-1$ & $-1$ & $1$ & \cg$1$ & $\alpha^3$ & $\alpha^3$ & \cg$0$ & $0$ & $\alpha$ & $\alpha$\\
 & $-\alpha^2$ & $-\alpha^2$ & $\alpha^3$  & $\alpha^3$ & $\alpha^2$  & $\alpha^2$ & $-\alpha$ & $-\alpha$ & \cg$0$ & $0$\\\cline{1-2}
\multirow{4}{*}{$P_{0}$} & $\alpha$& $\alpha$ & $-\alpha^3$ & $-\alpha^3$ & $-1$ & $-1$ & $\alpha^2$ & $\alpha^2$ & $\alpha^3$ & $\alpha^3$\\ 
 & $\alpha^3$ & $\alpha^3$ & $-\alpha$ & $-\alpha$ & $\alpha$ & $\alpha$ & $-\alpha^2$ & $-\alpha^2$ & $-1$ & $-1$ \\
 & $-\alpha$ & $-\alpha$ & $-\alpha^2$ & $-\alpha^2$ & $-\alpha^3$ & $-\alpha^3$ & $\alpha^3$ & $\alpha^3$ & $1$ & \cg$1$\\
 & $-\alpha^3$ & $-\alpha^3$ & $\alpha^2$ & $\alpha^2$ & $1$ & \cg$1$ & $\alpha$ & \cg$\alpha$ & $-\alpha$ & $-\alpha$\\\hline
\end{tabular}
\end{center}
\end{table}

To construct a permutation $F:=g+f$ by IP, we once again associate every entry $m_{r,c}$ of $M'_f$ with a $\{0,1\}$-valued variable $x_{r,c}$. As before, $x_{r,c}=1$ if and only if $m_{r,c}\in\mathcal{A}'$, but in terms of functions, this now means $F(c)=r$. As mentioned, we require every row and every column to be chosen exactly once in $\mathcal{A}'$, in order to make sure that $F$ is injective and well-defined, respectively. These correspond to the constraints (\ref{eq:gen_IP_row}) and (\ref{eq:gen_IP_col}). 

Next, define an integer-value variable ``$DU$'' to record the DU of $F$. Clearly, the objective is to minimize $DU$. To compute $DU$, for every $(a,b)\in \grp{G}^{\star}\times\grp{G}$, define an integer-valued variable $\delta_{a,b}$ recording the number of solutions of $\Delta_{F,a}(x)=b$. Since by definition of DU, $DU$ is the maximum among all $\delta_{a,b}$, we add constraints (\ref{eq:gen_IP_delta_DU}). To compute each $\delta_{a,b}$, observe that $x$ satisfies $F(x+a)-F(x)=b$ if and only if there is another input $y$ such that 
\[y=x+a,\text{ and }F(y)=F(x)+b.\]
In other words, we have both $x_{F(x),x}=1$ and $x_{F(y),y}=x_{F(x)+b,x+a}=1$. Therefore, the definition of $\delta_{a,b}$ can be written into the following sum of products of binary variables. For a logical statement $S$, the indicator function $\mathbbm{1}_{(S)}=1$ if $S$ is true, and $\mathbbm{1}_{(S)}=0$ otherwise. 
\begin{equation}\label{eq:ext_IP_delta}
\begin{aligned}
\delta_{a,b}&=\#\{c\in\grp{G} \,:\, F(c+a)-F(c)=b\}\\
&=\sum_{c\in\grp{G}}\mathbbm{1}_{(F(c+a)-F(c)=b)}\\
&=\sum_{c\in\grp{G}}\mathbbm{1}_{(x_{F(c)+b,c+a}=1\text{ and }x_{F(c),c}=1)}\\
&=\sum_{c\in\grp{G}}\sum_{r\in\grp{G}}\mathbbm{1}_{(x_{r+b,c+a}=1\text{ and }x_{r,c}=1)}\\
&=\sum_{c\in\grp{G}}\sum_{r\in\grp{G}}x_{r+b,c+a}x_{r,c}. 
\end{aligned}
\end{equation}
To make the last expression of (\ref{eq:ext_IP_delta}) linear, we replace each $x_{r+b,c+a}x_{r,c}$ by a $\{0,1\}$-valued variable $z_{a,b,r,c}$, which yields (\ref{eq:gen_IP_sum_delta}), and add two inequalities (\ref{eq:gen_IP_z}) for each choice of $(a,b,r,c)\in\grp{G}^{4},a\neq 0$. For transforming polynomial constraints into linear ones, we refer the reader to \cite[Section 3.4]{chen10} and references therein. 

From the final formulation of this generalized IP, we can see that the optimal solutions do not rely on the choice of $f$. In fact, one can just take $f=0$ if the only requirements are the constructed function being a permutation and having lowest DU. Compare to the IP in Section \ref{sc:alg_IP}, this generalized IP is more costly to solve since it has both a larger number of variables and a larger number of constraints. That said, the generalized IP can do more combinations of different measurements of a function. For example, we can still compute $V(g)$ by adding $\{0,1\}$-valued variables $y_v$ for all $v\in\grp{G}$, an integer-value variable $V(g)=\sum_{v\in\grp{G}}y_v$, and constraints $x_{r,c}\le y_v$ for every $(r,c)\in\grp{G}^2$ and $r-f(c)=v$. Hence, we can restrict the problem further by setting the value of $V(g)$ to $\pres(f)$ if it is known, or require $V(g)$ in an interval of possible values for $\pres(f)$. Alternatively, instead of minimizing $DU$, we could also replace the objective by minimizing $V(g)$, and restricting $DU$ to being equal to, or bounded above by, a certain value. 

\bibliographystyle{amsplain}
\bibliography{mybibfile.bib}

\providecommand{\bysame}{\leavevmode\hbox to3em{\hrulefill}\thinspace}
\providecommand{\MR}{\relax\ifhmode\unskip\space\fi MR }
\providecommand{\MRhref}[2]{%
  \href{http://www.ams.org/mathscinet-getitem?mr=#1}{#2}
}
\providecommand{\href}[2]{#2}
\begin{thebibliography}{1}

\bibitem{magma}
W.~Bosma, J.~Cannon, and C.~Playoust, \emph{The {M}agma algebra system. {I}.
  {T}he user language}, J. Symbolic Comput. \textbf{24} (1997), 235--265.

\bibitem{bro10}
K.A. Browning, J.F. Dillon, M.T. McQuistan, and A.J. Wolfe, \emph{An {APN}
  permutation in dimension six}, Finite fields: theory and applications,
  Contemp. Math., vol. 518, Amer. Math. Soc., Providence, RI, 2010, pp.~33--42.

\bibitem{chen10}
D.-S. Chen, R.G. Batson, and Y.~Dang, \emph{Applied integer programming :
  Modeling and simulation}, Wiley, 2010.

\bibitem{prestheory}
L.-A. Chen and R.S. Coulter, \emph{Permutation resemblance}, submitted.

\bibitem{gurobi}
{Gurobi Optimization, LLC}, \emph{{Gurobi Optimizer Reference Manual}}, 2021.

\bibitem{nyberg93}
K.~Nyberg, \emph{Differentially uniform mappings for cryptography}, Advances in
  Cryptology -- {EUROCRYPT '93}, Lecture Notes in Comput. Sci., Springer, 1993,
  pp.~55--64.

\bibitem{wagner20}
A.Z. Wagner, \emph{Refuting conjectures in extremal combinatorics via linear
  programming}, J. Combin. Theory Ser. A \textbf{169} (2020), 105130.

\end{thebibliography}

\section*{Appendix: Some computational results}

As an application of the algorithm, we opted to examine the
$\pres$ of $x^d$ over $\ff{q}$ with $\gcd(d,q-1)>1$. We were particularly
interested in $\pres(x^2)$,
as $x^2$ is a planar function (has optimal DU) but is obviously not a PP.
We thus computed $\pres(x^2)$ for all finite fields of odd order up to $343$.
We also determined
$\pres(x^d)$, $\gcd(d,q-1)\neq 1$, for prime fields of order $\le 103$.
These results can be found in Tables \ref{tb:x2_Fp}, \ref{tb:x2_Fpe}, \ref{tb:xd_1mod4}, and \ref{tb:xd_3mod4} below.
For some readers, especially those working on the problem of finding
low DU bijetions, the results of this appendix may be the most
interesting part of the paper. 

The results in Table \ref{tb:x2_Fp} and \ref{tb:x2_Fpe} show that the value of $\pres(x^2)$ grows very slowly. Recall that in Section \ref{sc:alg_ex}, we narrowed down the candidates of the value set of an admissible subtable to the covers of $\ff{q}$.
In the remark after Theorem \ref{th:expcover}, we point out that when $q$ is large enough and $f=x^2$, it is possible to have a cover of $\ff{q}$ with size roughly $\log_{2}q$, so that for sufficiently large $q$ we have
$\pres(x^2)\le \log_{2}q$.
In fact, apart from the primes $p=5,13$ for $p\equiv 1\pmod 4$ and $p=3,7$ for $p\equiv 3\pmod 4$, every entry in Tables \ref{tb:x2_Fp} and \ref{tb:x2_Fpe} satisfies $\pres(x^2)\le \log_{2}q$. 

The functions $f=x^d$ with $\gcd(d,q-1)=d>1$ are $d$-to-$1$ over $\ffs{q}$, so $u(f)=d$ and $V(f)=(q-1)/d+1$. For this case, the bounds $u(f)\le \pres(f)\le q-V(f)+1$ established in \cite{prestheory} are 
\begin{equation*}\label{eq:xd_bounds}
d\le \pres(f)\le q - \frac{q-1}{d}. 
\end{equation*}
In Tables \ref{tb:xd_1mod4} and \ref{tb:xd_3mod4}, we determined $\pres(x^d)$ 
for all proper divisors of $p-1$, with $p\le 103$ a prime.
We also provide the values for the lower and upper bounds for comparison.
The tabulated data consistently shows that $u(f)$ is a much better estimate for $\pres(x^d)$ than the upper bound is, and for large $d$ (relative to the field size), $\pres(x^d)$ tends to be very close to the lower bound. This is, in some sense, what intuition
would suggest, but we do not have a proof of this in general.
That said, for $d=(p-1)/2$, the largest possible choice of $d<p-1$,
the bounds for $\pres(x^d)$ reduce to $d\le \pres(f)\le 2d-1$ and in
\cite{prestheory} it was shown that $\pres(x^d)$ is either $d$ or
$d+1$ for this case.

\begin{table}\caption{$\pres(x^2)$ over prime fields $\ff{p}$ for $p\le 337$}\label{tb:x2_Fp}
\begin{center}
\begin{tabular}[t]{|c|c|}
\hline
$p\equiv 1\pmod 4$	&	$\pres(x^2)$	\\\hline
5	&	3	\\\hline
13	&	4	\\\hline
17	&	4	\\\hline
29	&	4	\\\hline
37	&	4	\\\hline
41	&	5	\\\hline
53	&	5	\\\hline
61	&	5	\\\hline
73	&	5	\\\hline
89	&	5	\\\hline
97	&	5	\\\hline
101	&	5	\\\hline
109	&	5	\\\hline
113	&	5	\\\hline
137	&	5	\\\hline
149	&	5	\\\hline
157	&	5	\\\hline
173	&	5	\\\hline
181	&	5	\\\hline
193	&	5	\\\hline
197	&	5	\\\hline
229	&	5	\\\hline
\end{tabular}
\quad
\begin{tabular}[t]{|c|c|}
\hline
$p\equiv 1\pmod 4$	&	$\pres(x^2)$	\\\hline
233	&	5	\\\hline
241	&	5	\\\hline
257	&	6	\\\hline
269	&	5	\\\hline
277	&	6	\\\hline
281	&	6	\\\hline
293	&	5	\\\hline
313	&	6	\\\hline
317	&	5	\\\hline
337	&	6	\\\hline\hline
$p\equiv 3\pmod 4$	&	$\pres(x^2)$	\\\hline
3	&	2	\\\hline
7	&	3	\\\hline
11	&	3	\\\hline
19	&	4	\\\hline
23	&	4	\\\hline
31	&	4	\\\hline
43	&	4	\\\hline
47	&	4	\\\hline
59	&	5	\\\hline
67	&	5	\\\hline
71	&	5	\\\hline
\end{tabular}
\quad
\begin{tabular}[t]{|c|c|}
\hline
$p\equiv 3\pmod 4$	&	$\pres(x^2)$	\\\hline
79	&	5	\\\hline
83	&	5	\\\hline
103	&	4	\\\hline
107	&	5	\\\hline
127	&	5	\\\hline
131	&	5	\\\hline
139	&	5	\\\hline
151	&	5	\\\hline
163	&	5	\\\hline
167	&	5	\\\hline
179	&	5	\\\hline
191	&	5	\\\hline
199	&	5	\\\hline
211	&	5	\\\hline
223	&	5	\\\hline
227	&	5	\\\hline
239	&	5	\\\hline
251	&	5	\\\hline
263	&	5	\\\hline
271	&	5	\\\hline
307	&	5	\\\hline
311	&	5	\\\hline
331	&	6	\\\hline
\end{tabular}
\end{center}
\end{table}

\begin{table}\caption{$\pres(x^2)$ over $\ff{q}$ for prime powers $q\le 343$}\label{tb:x2_Fpe}
\begin{center}
\begin{tabular}{|c|c|}
\hline
$q$	&	$\pres(x^2)$	\\\hline
$9=3^2$	&	3	\\\hline
$27=3^3$	&	4	\\\hline
$81=3^4$	&	5	\\\hline
$243=3^5$	&	6	\\\hline
$25=5^2$	&	4	\\\hline
$125=5^3$	&	5	\\\hline
\end{tabular}
\quad
\begin{tabular}{|c|c|}
\hline
$q$	&	$\pres(x^2)$	\\\hline
$49=7^2$	&	5	\\\hline
$343=7^3$	&	6	\\\hline
$121=11^2$	&	5	\\\hline
$169=13^2$	&	5	\\\hline
$289=17^2$	&	6	\\\hline
\end{tabular}
\end{center}
\end{table}

\begin{table}\caption{$\pres(x^d)$ over $\ff{p}$ for $p\equiv 1\mod 4$ and non-equivalent $x^d$}\label{tb:xd_1mod4}
\begin{center}
\begin{tabular}[t]{*{5}{|c}|}
\hline
$p$	&	$d$	&	$u(x^d)$	&	$\pres(x^d)$	&	$p-(p-1)/d$	\\\hline	
5	&	2	&	2	&	3	&	3	\\\hline	\hline
13	&	2	&	2	&	4	&	7	\\\hline	
13	&	3	&	3	&	5	&	9	\\\hline	
13	&	4	&	4	&	5	&	10	\\\hline	
13	&	6	&	6	&	7	&	11	\\\hline	\hline
17	&	2	&	2	&	4	&	9	\\\hline	
17	&	4	&	4	&	5	&	13	\\\hline	
17	&	8	&	8	&	9	&	15	\\\hline	\hline
29	&	2	&	2	&	4	&	15	\\\hline	
29	&	4	&	4	&	6	&	22	\\\hline	
29	&	7	&	7	&	9	&	25	\\\hline	
29	&	14	&	14	&	15	&	27	\\\hline	\hline
37	&	2	&	2	&	4	&	19	\\\hline	
37	&	3	&	3	&	6	&	25	\\\hline	
37	&	4	&	4	&	7	&	28	\\\hline	
37	&	6	&	6	&	7	&	31	\\\hline	
37	&	9	&	9	&	9	&	33	\\\hline	
37	&	12	&	12	&	14	&	34	\\\hline	
37	&	18	&	18	&	19	&	35	\\\hline	\hline
41	&	2	&	2	&	5	&	21	\\\hline	
41	&	4	&	4	&	6	&	31	\\\hline	
41	&	5	&	5	&	7	&	33	\\\hline	
41	&	8	&	8	&	11	&	36	\\\hline	
41	&	10	&	10	&	12	&	37	\\\hline	
41	&	20	&	20	&	21	&	39	\\\hline	\hline
53	&	2	&	2	&	5	&	27	\\\hline	
53	&	4	&	4	&	7	&	40	\\\hline	
53	&	13	&	13	&	15	&	49	\\\hline	
53	&	26	&	26	&	27	&	51	\\\hline	\hline
61	&	2	&	2	&	5	&	31	\\\hline	
61	&	3	&	3	&	6	&	41	\\\hline	
61	&	4	&	4	&	6	&	46	\\\hline	
61	&	5	&	5	&	7	&	49	\\\hline	
61	&	6	&	6	&	9	&	51	\\\hline	
61	&	10	&	10	&	13	&	55	\\\hline	
61	&	12	&	12	&	15	&	56	\\\hline	
\end{tabular}
\quad
\begin{tabular}[t]{*{5}{|c}|}
\hline
$p$	&	$d$	&	$u(x^d)$	&	$\pres(x^d)$	&	$p-(p-1)/d$	\\\hline	
61	&	15	&	15	&	17	&	57	\\\hline	
61	&	20	&	20	&	23	&	58	\\\hline	
61	&	30	&	30	&	31	&	59	\\\hline	\hline
73	&	2	&	2	&	5	&	37	\\\hline	
73	&	3	&	3	&	6	&	49	\\\hline	
73	&	4	&	4	&	7	&	55	\\\hline	
73	&	6	&	6	&	9	&	61	\\\hline	
73	&	8	&	8	&	9	&	64	\\\hline	
73	&	9	&	9	&	11	&	65	\\\hline	
73	&	12	&	12	&	15	&	67	\\\hline	
73	&	18	&	18	&	19	&	69	\\\hline	
73	&	24	&	24	&	27	&	70	\\\hline	
73	&	36	&	36	&	37	&	71	\\\hline	\hline
89	&	2	&	2	&	5	&	45	\\\hline	
89	&	4	&	4	&	7	&	67	\\\hline	
89	&	8	&	8	&	12	&	78	\\\hline	
89	&	11	&	11	&	15	&	81	\\\hline	
89	&	22	&	22	&	24	&	85	\\\hline	
89	&	44	&	44	&	45	&	87	\\\hline	\hline
97	&	2	&	2	&	5	&	49	\\\hline	
97	&	3	&	3	&	7	&	65	\\\hline	
97	&	4	&	4	&	7	&	73	\\\hline	
97	&	6	&	6	&	10	&	81	\\\hline	
97	&	8	&	8	&	13	&	85	\\\hline	
97	&	12	&	12	&	16	&	89	\\\hline	
97	&	16	&	16	&	21	&	91	\\\hline	
97	&	24	&	24	&	25	&	93	\\\hline	
97	&	32	&	32	&	36	&	94	\\\hline	
97	&	48	&	48	&	49	&	95	\\\hline	\hline
101	&	2	&	2	&	5	&	51	\\\hline	
101	&	4	&	4	&	8	&	76	\\\hline	
101	&	5	&	5	&	8	&	81	\\\hline	
101	&	10	&	10	&	15	&	91	\\\hline	
101	&	20	&	20	&	26	&	96	\\\hline	
101	&	25	&	25	&	25	&	97	\\\hline	
101	&	50	&	50	&	51	&	99	\\\hline	
\end{tabular}
\end{center}
\end{table}

\begin{table}\caption{$\pres(x^d)$ over $\ff{p}$ for $p\equiv 3\mod 4$  and non-equivalent $x^d$}\label{tb:xd_3mod4}
\begin{center}
\begin{tabular}[t]{*{5}{|c}|}
\hline
$p$	&	$d$	&	$u(x^d)$	&	$\pres(x^d)$	&	$p-(p-1)/d$	\\\hline	
3	&	2	&	2	&	2	&	2	\\\hline	\hline
7	&	2	&	2	&	3	&	4	\\\hline	
7	&	3	&	3	&	3	&	5	\\\hline	\hline
11	&	2	&	2	&	3	&	6	\\\hline	
11	&	5	&	5	&	5	&	9	\\\hline	\hline
19	&	2	&	2	&	4	&	10	\\\hline	
19	&	3	&	3	&	5	&	13	\\\hline	
19	&	6	&	6	&	7	&	16	\\\hline	
19	&	9	&	9	&	9	&	17	\\\hline	\hline
23	&	2	&	2	&	4	&	12	\\\hline	
23	&	11	&	11	&	11	&	21	\\\hline	\hline
31	&	2	&	2	&	4	&	16	\\\hline	
31	&	3	&	3	&	5	&	21	\\\hline	
31	&	5	&	5	&	7	&	25	\\\hline	
31	&	6	&	6	&	8	&	26	\\\hline	
31	&	10	&	10	&	12	&	28	\\\hline	
31	&	15	&	15	&	15	&	29	\\\hline	\hline
43	&	2	&	2	&	4	&	22	\\\hline	
43	&	3	&	3	&	6	&	29	\\\hline	
43	&	6	&	6	&	9	&	36	\\\hline	
43	&	7	&	7	&	10	&	37	\\\hline	
43	&	14	&	14	&	17	&	40	\\\hline	
43	&	21	&	21	&	21	&	41	\\\hline	\hline
47	&	2	&	2	&	4	&	24	\\\hline	
47	&	23	&	23	&	23	&	45	\\\hline	\hline
59	&	2	&	2	&	5	&	30	\\\hline	
59	&	29	&	29	&	29	&	57	\\\hline	
\end{tabular}
\quad
\begin{tabular}[t]{*{5}{|c}|}
\hline
$p$	&	$d$	&	$u(x^d)$	&	$\pres(x^d)$	&	$p-(p-1)/d$	\\\hline	
67	&	2	&	2	&	5	&	34	\\\hline	
67	&	3	&	3	&	6	&	45	\\\hline	
67	&	6	&	6	&	10	&	56	\\\hline	
67	&	11	&	11	&	13	&	61	\\\hline	
67	&	22	&	22	&	26	&	64	\\\hline	
67	&	33	&	33	&	33	&	65	\\\hline	\hline
71	&	2	&	2	&	5	&	36	\\\hline	
71	&	5	&	5	&	8	&	57	\\\hline	
71	&	7	&	7	&	11	&	61	\\\hline	
71	&	10	&	10	&	14	&	64	\\\hline	
71	&	14	&	14	&	18	&	66	\\\hline	
71	&	35	&	35	&	35	&	69	\\\hline	\hline
79	&	2	&	2	&	5	&	40	\\\hline	
79	&	3	&	3	&	6	&	53	\\\hline	
79	&	6	&	6	&	11	&	66	\\\hline	
79	&	13	&	13	&	17	&	73	\\\hline	
79	&	26	&	26	&	29	&	76	\\\hline	
79	&	39	&	39	&	39	&	77	\\\hline	\hline
83	&	2	&	2	&	5	&	42	\\\hline	
83	&	41	&	41	&	41	&	81	\\\hline	\hline
103	&	2	&	2	&	4	&	52	\\\hline	
103	&	3	&	3	&	6	&	69	\\\hline	
103	&	6	&	6	&	11	&	86	\\\hline	
103	&	17	&	17	&	22	&	97	\\\hline	
103	&	34	&	34	&	37	&	100	\\\hline	
103	&	51	&	51	&	51	&	101	\\\hline	
\end{tabular}
\end{center}
\end{table}

\end{document}